 \newtheorem{thm}{Theorem}[section]
 \newtheorem{cor}[thm]{Corollary}
 \newtheorem{prop}[thm]{Proposition}
 \newtheorem{conj}[thm]{Conjecture}
 \newtheorem{lem}[thm]{Lemma}
  \newtheorem{prob}[thm]{Problem}
 \newtheorem{defn}[thm]{Definition}
 \theoremstyle{remark}
\newcommand{\f}{\mathbb{F}_q}
\newcommand{\fm}{\mathbb{F}_{q^m}}
\newcommand{\ft}{\mathbb{F}_{q^2}}
\newcommand{\fh}{\mathbb{F}_{q^h}}
\newcommand{\fk}{\mathbb{F}_{q^k}}
\newcommand{\ff}{\mathbb{F}_p}
\newcommand{\fp}{\mathbb{F}_{p^p}}
\begin{document}
\title[On counting subset sums over finite abelian groups]
  { On the construction of small subsets containing special elements in a finite field}

\author{Jiyou Li}
\address{Department of Mathematics, Shanghai Jiao Tong University, Shanghai, P.R. China}
\email{lijiyou@sjtu.edu.cn}

\thanks{This work is supported by the National Science Foundation of China (1771280) and the National Science Foundation of Shanghai Municipal (17ZR1415400). }





\begin{abstract}
In this note we construct a series of small subsets
containing a non-d-th power element in a finite field
by applying certain bounds on incomplete character sums.

 Precisely, let $h=\lfloor q^{\delta}\rfloor>1$ and $d\mid q^h-1$.
   Let $r$ be a prime divisor of $q-1$ such that the largest prime power part of $q-1$ has  the form $r^s$.
   Then there is a constant $0<\epsilon<1$ such that for a ratio at least $ {q^{-\epsilon h}}$ of $\alpha\in \mathbb{F}_{q^{h}} \backslash\mathbb{F}_{q}$,  the set $S=\{ \alpha-x^t, x\in\mathbb{F}_{q}\}$
of cardinality $1+\frac {q-1} {M(h)}$
  contains a non-d-th  power in $\mathbb{F}_{q^{\lfloor q^\delta\rfloor}}$, where $t$ is the largest power of $r$ such that $t<\sqrt{q}/h$ and $M(h)$ is defined as
   $$M(h)=\max_{r \mid (q-1)} r^{\min\{v_r(q-1),  \lfloor\log_r{q}/2-\log_r h\rfloor\}}.$$
    Here $r$ runs thourgh prime divisors and $v_r(x)$ is the $r$-adic oder of $x$.
For odd $q$, the choice of $\delta=\frac 12-d, d=o(1)>0$ shows
that there exists an explicit subset of cardinality $q^{1-d}=O(\log^{2+\epsilon'}(q^h))$ containing a non-quadratic element in  the field $\mathbb{F}_{q^h}$.
  On the other hand, the choice of $h=2$ shows that for any odd prime power $q$, there is an explicit subset of cardinality $1+\frac {q-1}{M(2)}$ containing a non-quadratic element in $\mathbb{F}_{q^2}$. This improves a $q-1$ construction by Coulter and Kosick \cite{CK} since $\lfloor \log_2{(q-1)}\rfloor\leq M(2) < \sqrt{q}$.



  In addition, we obtain a similar construction for small sets containing a primitive element. The construction works well provided $\phi(q^h-1)$ is very small, where $\phi$ is the Euler's totient function.
\end{abstract}
\maketitle 

\section{Introduction}

For an odd prime $p$, it is a historical and hard problem to determine $q_p$,  the least quadratic non-residue or generally, the least d-th power  non-residue of $p$.
Clearly, this problem is essentially reduced to finding a nontrivial upper bound for the sum $$\sum_{1 \leq i\leq x}\chi(x),$$ where $\chi$ is a non-principal multiplicative character modulo $p$.

 There are two well-known and important upper bounds for this
quantity. The first, discovered independently by Polya and Vinogradov one century ago, asserts
$$|\sum_{1 \leq i\leq x}\chi(x)|\ll \sqrt{p}\log{p}$$
and this implies there is a constant $C$ such that
$$q_p\leq C\sqrt{p}\log{p}.$$

It is believed that the Polya-Vinogradov bound can be improved to
$$|\sum_{1 \leq i\leq x}\chi(x)|\ll \sqrt{p}\log{\log{p}},$$
which was proved by Montgomery and Vaugham under Generalized Riemann Hypothesis(GRH).

The second,  the work of Burgess and Hildebrand (1962) [12] extended the range of $x$. It was shown that for all primitive real quadratic characters $\chi \mod p $, $$|\sum_{1 \leq i\leq x}\chi(x)|=o(x)$$
provided  $x>p^{1/4+\epsilon}$. This implies  that for every $\epsilon > 0$  there is a constant $C$ such that $${\displaystyle q_{p}\leq Cp^{{\frac {1}{4}}+\epsilon }.} $$

Note that the index $\frac 1 4$ can be replaced by $\frac 1 {4 \sqrt{e}}$ by a sieving argument for quadratic character.

It is also believed that Burgess bound can be improved to
$$|\sum_{1 \leq i\leq x}\chi(x)|=o(x)$$
provided  $x>p^{\epsilon}$, which was also known to be true under GRH. Actually  Linnik showed that under GRH, for any $\epsilon>0$, $$q_g=o(p^{\epsilon}).$$

   The best bound assuming GRH  was given by Ankeny \cite{A}:    $$q_p=O(\log^2p).$$
   In the same paper the author claimed the same bound for $d$-th power  non-residue and the least prime $g$ which is quadratic residue mod p.

    On the other hand, Chowla and Turan showed that the bound $q_p=O(\log^2p)$ is not far from the best possible, namely, there is a positive constant $C$ such that for infinitely many primes $p$ one has $$q_p > C \log p.$$

    Few explicit bounds were known.  Since an upper bound for primitive roots always works for quadratic non-residues, Grosswald's work \cite{G} shows that if ${\displaystyle p>e^{e^{24}}\approx 10^{10^{10}}},$ then ${\displaystyle q_{p}<p^{0.499}}.$
And the work of Cohen, Oliveria e Silva and Trudgian \cite{COT} shows that if  $p> 4\cdot 10^{71}$, then
  $ q_p < p^{1/2} -2 .$
A unconditional bound was given by Cohen and Trudgian \cite{CT} that for any $p$,
   $$ q_p < p^{0.96}.$$

In computational number theory and theoretical computer sciences, it is a basic problem to deterministically generate a quadratic non-residue efficiently.  For instance, generating a quadratic non-residue plays a crucial role in computing a square root of an element  mod a prime number.

The above $\log^c{p}$ type bound immediately gives a polynomial deterministic  algorithm to return a quadratic non-residue under GRH. And thus it is an interesting  problem to find a relatively small set containing desired elements.
 In fact, the deterministic output of a small subset containing special  elements such as primitive elements and quadratic non-residues was widely studied.


In this paper, thanks to the Weil's bound on function fields, we establish several constructions in finite fields by using tools from algebraic function fields. We construct a series of explicit small sets containing a quadratic non-residue, or generally a non-d-th power.

Let us start from a problem raised by Coulter and Kosick \cite{CK}:
\begin{prob}[Coulter and Kosick]
Let $A$ be a subset of $\f^*$ of order $O(q)$. What is the minimum order of a subset $S$ of $\f^*$ so that for all $a\in A$, the set $\{a-s, s\in S\}$ contains both a square and non-square element of $\f^*$?
\end{prob}

The authors constructed such subsets $S$ of cardinality roughly equal to  $\sqrt{q}$,
by using methods from algebraic combinatorics. Precisely they proved the following result.

 \begin{thm}[Coulter and Kosick]Assume $q\geq 7$ and let $A=\alpha\in\ft-\f$.
 Then for any $\alpha\in A$, the set $S=\{\alpha-a^2, a\in\f^*\}$ contains both a square and a non-square in $\ft$.
 \end{thm}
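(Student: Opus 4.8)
\section*{Proof proposal}

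The plan is to translate the statement into an incomplete multiplicative character sum over $\f$ and then invoke Weil's bound. Let $\chi$ denote the quadratic character of $\ft^{*}$. Since $\alpha\notin\f$ we have $\alpha-a^{2}\neq 0$ for every $a\in\f$, so $\chi(\alpha-a^{2})=\pm1$; writing $N_{+}$ (resp.\ $N_{-}$) for the number of $a\in\f^{*}$ with $\alpha-a^{2}$ a square (resp.\ a non-square) in $\ft$, we have $N_{+}+N_{-}=q-1$ and $N_{+}-N_{-}=\Sigma$, where $\Sigma:=\sum_{a\in\f^{*}}\chi(\alpha-a^{2})$. Hence it suffices to prove $|\Sigma|<q-1$: then $N_{+},N_{-}>0$, so $S$ meets both square classes.

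The key step is to rewrite $\Sigma$ as a character sum over $\f$ of a polynomial with \emph{coefficients in $\f$}, so that Weil's bound applies directly. For this I would use the factorization $\chi=\psi\circ\N$, where $\psi$ is the quadratic character of $\f$ and $\N$ denotes the norm from $\ft$ to $\f$: indeed $\psi(\N(z))=z^{(q+1)(q-1)/2}=z^{(q^{2}-1)/2}=\chi(z)$. For $a\in\f$ one computes $\N(\alpha-a^{2})=(\alpha-a^{2})(\alpha^{q}-a^{2})=h(a)$, where
$h(X)=X^{4}-\mathrm{Tr}_{\ft/\f}(\alpha)\,X^{2}+\N(\alpha)\in\f[X]$.
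Therefore $\Sigma=\sum_{a\in\f}\psi(h(a))-\psi(\N(\alpha))$.

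Next I would check that $h$ is not a constant times a square in $\f[X]$, so that Weil's theorem for multiplicative character sums applies. Over $\overline{\f}$ we have $h(X)=(X^{2}-\alpha)(X^{2}-\alpha^{q})$; since $\alpha\notin\f$ we have $\alpha\neq\alpha^{q}$ and both are nonzero, so the four roots $\pm\sqrt{\alpha},\pm\sqrt{\alpha^{q}}$ are pairwise distinct, i.e.\ $h$ is separable of degree $4$, and in particular not a constant times a nonconstant square. (It is exactly here that the hypothesis $\alpha\in\ft\setminus\f$ enters.) Weil's bound then gives $\bigl|\sum_{a\in\f}\psi(h(a))\bigr|\le 3\sqrt{q}$, whence $|\Sigma|\le 3\sqrt{q}+1$, which is $<q-1$ as soon as $q\ge 13$. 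This settles all but finitely many $q$.

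The only place where genuine care is needed is the remaining odd prime powers with $7\le q<13$, namely $q\in\{7,9,11\}$. For these I would either verify the statement by a direct finite computation (for each of the finitely many $\alpha\in\ft\setminus\f$, exhibiting an $a$ with $\alpha-a^{2}$ a square and an $a'$ with $\alpha-a'^{2}$ a non-square), or sharpen the estimate of $\Sigma$ by exploiting the explicit shape of $h$ — for instance controlling $\psi(\N(\alpha))$ and the number of $\f$-rational roots of $h$ according to whether $\alpha$ is a square in $\ft$. I expect the main obstacle to be precisely this bookkeeping: either making the Weil constant sharp enough to reach $q=7$, or disposing of the three small cases by hand.
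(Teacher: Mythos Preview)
Your argument is sound: the norm identity $\chi=\psi\circ\N$ reduces the incomplete sum over $\f$ of the $\ft$-character to a complete Weil sum of the $\f$-character against the degree-$4$ polynomial $h(X)=X^4-\mathrm{Tr}(\alpha)X^2+\N(\alpha)$, and your separability check is correct. The bound $|\Sigma|\le 3\sqrt q+1<q-1$ for $q\ge 13$ then finishes the generic case, and the residual $q\in\{7,9,11\}$ is a finite verification.

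Two remarks on the comparison. First, the paper does not supply its own proof of this statement: it is quoted from Coulter--Kosick \cite{CK}, whose argument the paper describes as coming from algebraic combinatorics rather than character sums. Second, the paper's in-house machinery for such estimates is different from yours. Rather than pushing the character down to $\f$ via the norm, the paper works directly with the $\ft$-character and invokes Wan's incomplete-sum bound (Theorem~\ref{main}) for $f(x)=x^2-\alpha\in\ft[x]$ summed over $x\in\f$; this is the content of the ``toy example'' Proposition~2.1, which yields the same $3\sqrt q$ constant but is stated under the extra hypothesis that $\alpha$ is a non-square in $\ft$ (so that $x^2-\alpha$ is irreducible and the simplified form of Theorem~\ref{main} applies). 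Your norm-reduction route has the advantage that the separability of $h$ follows for \emph{every} $\alpha\in\ft\setminus\f$, with no case distinction on whether $\alpha$ is a square; the paper's route has the advantage of fitting into the general template used throughout (polynomials over $\fm$, summed over $\f$), at the cost of that extra hypothesis in the toy case.
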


Since finding a square is a trivial task, we may ask the following essentially equivalent question:
\begin{prob}
Let $A$ be a subset of $\f^*$ of order $O(q)$. Construct small subsets $S$ of $\f^*$ so that for all $a\in A$, the set $\{a-s, s\in S\}$ contains a non-square  in $\f^*$.
\end{prob}

When the base field  is special and the field extension is large enough, one can expect stronger constructions. For instance, in \cite{HM} Heath-Brown and Micheli proved the following result.

\begin{thm}[Heath-Brown and Micheli]
 Suppose $p$ is an odd prime and $h(x)=x^p-x-a\in \ff[x]$, $\fp$ is the degree $p$ extension of $\ff$. If $a$ is not a square in $\ff$, then for any root $\alpha$ of $h(x)$, each element in the set $\alpha+\ff$ is a non-square in $\fp$.
\end{thm}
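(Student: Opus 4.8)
The plan is to reduce the claim to a single norm computation. Since $a$ is a non-square in $\ff$ we have $a\neq 0$. From $h(\alpha)=0$ we get $\alpha^p=\alpha+a$, and since $a\in\ff$, iterating the Frobenius gives $\alpha^{p^i}=\alpha+ia$ for $0\le i\le p-1$. These $p$ elements $\alpha+ia$ are pairwise distinct (because $a\neq 0$) and each is a root of $h$, so $h$ is the minimal polynomial of $\alpha$, hence $\fp=\ff(\alpha)$ and the full root set of $h$ is $\{\alpha+ia:i\in\ff\}=\alpha+\ff$. In particular every element of $\alpha+\ff$ is Galois conjugate to $\alpha$, so it suffices to compute $\N_{\fp/\ff}(\alpha+c)$ for a generic $c\in\ff$.

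Next I would invoke the elementary criterion: for odd $q$, an element $\beta\in\fm^{\ast}$ is a square in $\fm$ if and only if $\N_{\fm/\f}(\beta)$ is a square in $\f$. Indeed $\beta$ is a square iff $\beta^{(q^m-1)/2}=1$, and from $\N_{\fm/\f}(\beta)=\beta^{(q^m-1)/(q-1)}$ one obtains the exact identity $\N_{\fm/\f}(\beta)^{(q-1)/2}=\beta^{(q^m-1)/2}$, whence $\beta$ is a square in $\fm$ iff $\N_{\fm/\f}(\beta)^{(q-1)/2}=1$ iff $\N_{\fm/\f}(\beta)$ is a square in $\f$.

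For the norm itself, using $c^{p^i}=c$ and the list of conjugates above,
$$\N_{\fp/\ff}(\alpha+c)=\prod_{i=0}^{p-1}(\alpha+c)^{p^i}=\prod_{i=0}^{p-1}(\alpha+ia+c)=\prod_{j\in\ff}(\alpha+j),$$
where the last equality reindexes by $j=ia+c$, which ranges over all of $\ff$ since $a\neq 0$. The remaining product is computed by evaluating the factorization $h(x)=\prod_{j\in\ff}(x-\alpha-j)$ at $x=0$: this yields $-a=h(0)=(-1)^p\prod_{j\in\ff}(\alpha+j)=-\prod_{j\in\ff}(\alpha+j)$ since $p$ is odd, so $\prod_{j\in\ff}(\alpha+j)=a$. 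Hence $\N_{\fp/\ff}(\alpha+c)=a$ for every $c\in\ff$.

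Putting the pieces together completes the argument: as $a$ is a non-square in $\ff$ and $\N_{\fp/\ff}(\alpha+c)=a$, the criterion of the second step forces $\alpha+c$ to be a non-square in $\fp$ for every $c\in\ff$, that is, every element of $\alpha+\ff$ is a non-square. The proof is short and essentially without obstruction; the only point deserving any care is the norm/square criterion, and even that is routine.
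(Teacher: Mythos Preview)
Your argument is correct. The key steps---identifying the Galois conjugates of $\alpha$ as $\alpha+ia$, the square/norm criterion via $\N_{\fp/\ff}(\beta)^{(p-1)/2}=\beta^{(p^p-1)/2}$, and the norm computation $\N_{\fp/\ff}(\alpha+c)=a$---are all sound. In fact the norm can be read off even more directly from the identity $x^p-x=\prod_{j\in\ff}(x-j)$ evaluated at $x=\alpha$, but your route via $h(0)$ is equally valid.

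As for comparison: the paper does not supply its own proof of this statement. It is quoted in the introduction as a result of Heath-Brown and Micheli \cite{HM}, used only to motivate the constructions that follow, and the reader is implicitly referred to that source. Your self-contained proof via the norm is the standard one for Artin--Schreier extensions and is essentially what one finds in \cite{HM}.
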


In general, they proved the following theorem, which was used to construct large family of \texttt{"}dynamically irreducible\texttt{"}  quadratic polynomials.

\begin{thm}[Heath-Brown and Micheli]
Suppose $p$ is an odd prime with $p\equiv 1 (\bmod \ 4)$.
    If $q>p^{p \cdot \frac {\sqrt{2}} {\log p}}$, then there is an element $\alpha$ in $\f \backslash \ff$  such that all of the elements in the set $\alpha+\ff$ are non-squares in $\f$.
\end{thm}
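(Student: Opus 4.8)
The plan is to establish existence by counting, through multiplicative character sums over $\f$, the number $N$ of $\alpha\in\f$ for which every element of $\alpha+\ff$ is a non-square. Any such $\alpha$ automatically lies outside $\ff$, since $\alpha\in\ff$ would force $1\in\alpha+\ff$ and $1$ is a square; so it suffices to prove $N\ge 1$. Let $\chi$ be the quadratic character of $\f$, with the convention $\chi(0)=0$. Because the product below vanishes whenever some $\alpha+c$ is a nonzero square (e.g.\ the factor indexed by $c=1-\alpha$ when $\alpha\in\ff$), we have
\[
N=\sum_{\alpha\in\f}\ \prod_{c\in\ff}\frac{1-\chi(\alpha+c)}{2}=\frac1{2^{p}}\sum_{\alpha\in\f}\ \prod_{c\in\ff}\bigl(1-\chi(\alpha+c)\bigr),
\]
and each summand lies in $\{0,1\}$, so $N$ is a non-negative integer equal to the desired count.

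Next I would multiply out the product over subsets $T\subseteq\ff$, writing $f_T(x)=\prod_{c\in T}(x+c)$, so that $\prod_{c\in T}\chi(\alpha+c)=\chi(f_T(\alpha))$ and
\[
2^{p}N=\sum_{T\subseteq\ff}(-1)^{|T|}\sum_{\alpha\in\f}\chi\bigl(f_T(\alpha)\bigr).
\]
The term $T=\varnothing$ contributes exactly $q$. For $T\ne\varnothing$, the polynomial $f_T$ is squarefree of degree $|T|\ge1$ (its roots $-c$, $c\in T$, are distinct elements of $\ff$), hence not a perfect square in $\overline{\f}[x]$, and Weil's bound for character sums over a rational function field gives $\bigl|\sum_{\alpha\in\f}\chi(f_T(\alpha))\bigr|\le(|T|-1)\sqrt q$. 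Summing over all nonempty $T$ and using $\sum_{j=1}^{p}\binom{p}{j}(j-1)=2^{p-1}(p-2)+1$ yields
\[
2^{p}N\ \ge\ q-\bigl(2^{p-1}(p-2)+1\bigr)\sqrt q .
\]

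The remaining step is purely quantitative: the right-hand side is positive, hence $N\ge 1$, as soon as $\sqrt q>2^{p-1}(p-2)+1$, i.e.\ once $q$ exceeds roughly $p^{2}4^{p}/4$, and I would then check that the hypothesis $q>p^{\,p\sqrt2/\log p}$ is at least this large for every admissible $p$ — the constant $\sqrt2$ being chosen with enough slack to swallow the polynomial factor and the lower-order terms — with any finitely many small $p$ for which this is tight handled by a direct verification. I expect the genuine work to lie exactly here: in tightening the error estimate enough to license the clean exponent $p\sqrt2/\log p$ rather than a worse constant together with polynomial corrections. The congruence $p\equiv1\pmod4$ is presumably spent in this optimization — it forces $q\equiv1\pmod4$, hence $\chi(-1)=1$, which permits pairing each subset $T$ with its complement via the identity $f_T(x)\,f_{\ff\setminus T}(x)=x^{p}-x$ and organizing the error term more efficiently — or else it is simply inherited from the intended application to constructing dynamically irreducible quadratic polynomials.
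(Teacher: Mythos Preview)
This theorem is not proved in the paper: it is quoted in the introduction as a result of Heath-Brown and Micheli (reference \cite{HM}) purely to give context for the paper's own constructions, and no argument or sketch is supplied. There is therefore nothing in the present paper to compare your proposal against.

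Your outline is the natural character-sum counting argument and is presumably the skeleton of the original proof. Your self-diagnosis of the soft spot is accurate, but you understate its severity. The termwise Weil bound gives positivity once $\sqrt{q}>2^{p-1}(p-2)+1$, i.e.\ roughly $q>4^{p-1}(p-2)^{2}$, whereas the hypothesis $q>p^{\,p\sqrt{2}/\log p}=e^{p\sqrt{2}}$ exceeds this only when $e^{(\sqrt{2}-2\ln 2)p}$ dominates $(p-2)^{2}/4$. Since $\sqrt{2}-2\ln 2\approx 0.028$, the polynomial factor is not absorbed until $p$ is on the order of several hundred, so ``finitely many small $p$ handled by a direct verification'' is not a realistic closing move. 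Reaching the clean constant $\sqrt{2}$ requires a genuinely sharper treatment of the error --- some cancellation among the $2^{p}$ subset sums beyond what the individual Weil bounds see --- and that refinement, together with whatever role the hypothesis $p\equiv 1\pmod 4$ actually plays, lives in \cite{HM} rather than in the paper under review.
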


In this paper we give a class of explicit constructions of such kind of small subsets, in which the cardinality is
much smaller than previous constructions for the same extension degree. Some constructions achieve the $\log^c q$ type bound. Precisely, we have the following general result.
\begin{thm} Let $h>1$ and $d\mid q^h-1$. For any $\alpha\in\fh\backslash\f$, let $e$ be the (multiplicative) order of $\alpha$. Suppose $h, t$ satisfy the the following conditions:

 1. $(t, \frac{q^h-1}e)=1;$

 2. Each prime factor of $t$ divides $e$;

 3. $q^h\equiv 1(\bmod \ 4)$ if $t\equiv0 (\bmod\ 4)$;

 4.  $th\leq \sqrt{q}$.

Then the set  $S=V(\alpha-x^t)=\{\alpha-x^t, x\in\f\}$  contains a non-d-th power in $\fh$.
 \end{thm}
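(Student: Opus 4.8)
The plan is to count the $d$-th powers in the set $S=V(\alpha-x^t)$ via a character sum and show that this count is strictly smaller than $|S|$, so that at least one element of $S$ must fail to be a $d$-th power.

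First I would set up the counting. Let $\psi$ be a multiplicative character of $\fh^\*$ of order $d$ (extended by $\psi(0)=0$). For $\beta\in\fh$, the element $\beta$ is a nonzero $d$-th power exactly when $\sum_{j=0}^{d-1}\psi^j(\beta)=d$, and the count of representations-weighted indicator is $0$ when $\beta$ is a nonzero non-$d$-th power. So the number of $x\in\f$ with $\alpha-x^t$ a $d$-th power is controlled by
$$\frac{1}{d}\sum_{j=0}^{d-1}\sum_{x\in\f}\psi^j(\alpha-x^t),$$
after handling separately the (few) $x$ with $\alpha-x^t=0$. The $j=0$ term contributes essentially $|S|$ (i.e. $\approx q/\gcd(t,q-1)$ counted appropriately, since $x\mapsto x^t$ is $\gcd(t,q-1)$-to-one onto the image, but note $|S|=1+\frac{q-1}{M(h)}$ only after we pick $t$ so that $\gcd(t,q-1)=M(h)$ — here though $t$ is a prime power and the relevant count of distinct values is what matters). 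The point is that if every nonzero element of $S$ were a $d$-th power, the left side would equal (almost) $|S|$, forcing the nontrivial terms $j\neq 0$ to be large; I will contradict that with Weil.

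The heart of the argument is bounding $\left|\sum_{x\in\f}\psi^j(\alpha-x^t)\right|$ for $1\le j\le d-1$ by the Weil bound for character sums, which needs the polynomial $\alpha-x^t\in\fh[x]$ to \emph{not} be a $d/\gcd(d,j)$-th power times a constant in $\fh[x]$ — equivalently the function $\alpha-x^t$ is not, up to constants, a perfect power matching the character order on the curve. This is exactly where hypotheses 1–3 enter. Condition 2 (every prime factor of $t$ divides $e=\mathrm{ord}(\alpha)$) together with condition 1 ($\gcd(t,(q^h-1)/e)=1$) is designed to force $\alpha$ itself not to be a perfect power of the relevant order inside $\fh$ in a way compatible with the factorization $\alpha - x^t$; and condition 3 handles the parity obstruction when $4\mid t$ (Weil/Hasse–Weil subtleties for even-degree terms, the classic "$x^t$ with $4\mid t$" exception). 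Granting non-degeneracy, Weil gives
$$\Big|\sum_{x\in\f}\psi^j(\alpha-x^t)\Big|\le (t-1)\sqrt{q}.$$
Summing over the $d-1$ nontrivial characters and invoking hypothesis 4 ($th\le\sqrt q$, hence roughly $t\le\sqrt q$), the total error is at most about $(t-1)\sqrt q\le q$ — and a careful accounting (using that the number of distinct values in $S$ is $\frac{q-1}{\gcd(t,q-1)}+1$, which for our prime-power $t$ with $t\mid q-1$ equals $\frac{q-1}{t}+1$, while the error per character is $(t-1)\sqrt q$) must be arranged so the main term genuinely dominates. I expect this is where the precise inequality $th\le\sqrt q$ is calibrated: it ensures $(t-1)\sqrt q$ is smaller than $d\cdot|S|$ minus the $d$-th-power count, so a non-$d$-th power in $S$ is forced.

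The main obstacle — and the step deserving the most care — is the non-degeneracy verification: showing that $\alpha - x^t$ is not a scalar multiple of a $k$-th power in $\fh[x]$ for the relevant $k=d/\gcd(d,j)>1$, uniformly in $j$. Since $\alpha-x^t$ has distinct roots (the $t$-th roots of $\alpha$ in $\overline{\fh}$, all distinct as $\alpha\neq 0$ and $\gcd(t,p)=1$ which follows from hypotheses since $t\mid q^h-1$), it is \emph{never} a perfect power in $\overline{\fh}[x]$; the real issue is whether, after applying $\psi^j$, the induced cover of $\mathbb{P}^1$ is ramified enough. Equivalently one reduces to: is $\alpha$ a $(t/\gcd(t, \cdot))$-th power in $\fh$? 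Conditions 1–2 are precisely what prevents this — I would argue that any prime $\ell\mid t$ divides $e$, so $\alpha$ has order divisible by a power of $\ell$ that obstructs $\alpha$ being an $\ell$-th power in the subgroup generated appropriately, while condition 1 guarantees raising to the $t$-th power stays injective on the relevant subgroup. Once that is nailed down, Weil plus the arithmetic of hypothesis 4 finishes it routinely.
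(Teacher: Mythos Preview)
Your overall strategy---count $d$-th powers in $S$ via characters and bound the nontrivial terms by Weil---matches the paper's. But two steps in your execution do not go through.

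First, the bound $\bigl|\sum_{x\in\f}\psi^j(\alpha-x^t)\bigr|\le (t-1)\sqrt{q}$ is not available here. The character $\psi^j$ lives on $\fh^*$ and the polynomial $\alpha-x^t$ has its constant term in $\fh\setminus\f$, yet the summation runs only over the subfield $\f$. This is an \emph{incomplete} character sum in the sense of Wan~\cite{W}, and the correct estimate (Theorem~\ref{main}) is $(hD-1)\sqrt{q}$ with $D=t$. The extra factor $h$ is exactly why condition~4 reads $th\le\sqrt{q}$: then $(ht-1)\sqrt{q}<q$, while if every value $\alpha-x^t$ (none of which vanishes, since $x^t\in\f$ but $\alpha\notin\f$) were a $d$-th power the sum would equal $q$. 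Your accounting never produces this $h$, and the sentence ``I expect this is where $th\le\sqrt q$ is calibrated'' is left hanging.

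Second, the non-degeneracy hypothesis in Wan's bound is not merely that $\alpha-x^t$ be squarefree or fail to be a perfect $k$-th power. It requires $\chi$ to be nontrivial on the image $\N_{\fh[\zeta]/\fh}(\f[\zeta]^*)$ for some root $\zeta$. The paper secures this by recognizing that conditions 1--3 are precisely the classical criterion (Lemma~\ref{Lemma2.5}) for the binomial $x^t-\alpha$ to be \emph{irreducible} over $\fh$. Irreducibility gives $\f[\zeta]=\f[\zeta,\alpha]=\fh[\zeta]$, so the norm is onto $\fh^*$ and any nontrivial $\chi$ satisfies the hypothesis (this is Proposition~\ref{thm2.2}). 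Your reduction to ``is $\alpha$ a perfect power in $\fh$'' is in the right direction but does not reach irreducibility; without it, $\f[\zeta]$ can be a proper subfield of $\fh[\zeta]$ and the norm image can lie in $\ker\chi$, whereupon the bound collapses.
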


Some conditions above can be simplified in many cases.  For instance,  if $\omega$ is primitive in $\fh$, then condition $1$ can be dropped and if $q$ is odd one can always choose $t=2^k$.  To get the best construction, we need to take a prime divisor $r$ of  $q-1$
such that the largest prime power part of $q-1$ equals $r^s$ for some s.  In particular we have the following corollaries.

\begin{cor} Let $h=\lfloor q^{\delta}\rfloor>1$ and $d\mid q^h-1$.
   Let $r$ be a prime divisor of $q-1$ such that the largest prime power part of $q-1$ has  the form $r^s$.
   Then there is a constant $0<\epsilon<1$ such that for a ratio at least $ {q^{-\epsilon h}}$ of $\alpha\in \mathbb{F}_{q^{h}} \backslash\mathbb{F}_{q}$,  the set $S=\{ \alpha-x^t, x\in\mathbb{F}_{q}\}$
of cardinality $1+\frac {q-1} {M(h)}$
  contains a non-d-th  power in $\mathbb{F}_{q^{\lfloor q^\delta\rfloor}}$, where $t$ is the largest power of $r$ such that $t<\sqrt{q}/h$ and $M(h)$ is defined as
   $$M(h)=\max_{r \mid (q-1)} r^{\min\{v_r(q-1),  \lfloor\log_r{q}/2-\log_r h\rfloor\}}.$$
    Here $r$ runs thourgh prime divisors and $v_r(x)$ is the $r$-adic oder of $x$.
 \end{cor}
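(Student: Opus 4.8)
The plan is to deduce the corollary from the general theorem above by unpacking it for a specific, explicit choice of $t$. Take $t=r^a$, a power of the prime $r$. Since $t<\sqrt q/h$ we get $th<\sqrt q$, so hypothesis $4$ of the theorem is automatic (this already forces $h<\sqrt q$, which is the regime where $M(h)\ge1$). If $a=0$ then $t=1$ and the remaining hypotheses hold trivially for every $\alpha\in\fh\setminus\f$, with $|S|=q$, so assume $a\ge1$. It remains to control $|S|$ and to show hypotheses $1$, $2$, $3$ hold for many $\alpha$.

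For the cardinality: on $\f$ the map $x\mapsto x^t$ has image of size $1+(q-1)/\gcd(t,q-1)$, the nonzero values forming the index-$\gcd(t,q-1)$ subgroup of $t$-th powers in $\f^*$. Thus
$$|S|=1+\frac{q-1}{\gcd(r^a,q-1)}=1+\frac{q-1}{r^{\min\{a,\,v_r(q-1)\}}}.$$
As $a$ is the largest exponent with $r^a<\sqrt q/h$, we have $\min\{a,v_r(q-1)\}=\min\{v_r(q-1),\lfloor\log_rq/2-\log_rh\rfloor\}$ outside the harmless case that $\sqrt q/h$ is an integer power of $r$. Choosing $r$ to be the prime divisor of $q-1$ attaining the maximum in the definition of $M(h)$ --- which, whenever the largest prime-power part $r^s$ of $q-1$ is at most $\sqrt q/h$, is precisely the prime whose $r^s$ is that largest prime-power part --- then gives $\gcd(r^a,q-1)=M(h)$, hence $|S|=1+(q-1)/M(h)$.

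For hypotheses $1$ and $2$: since $t=r^a$ has only the prime factor $r$, hypothesis $2$ says $r\mid e:=\order(\alpha)$ and hypothesis $1$ says $r\nmid(q^h-1)/e$; as $e\mid q^h-1$ the two together are equivalent to $v_r(e)=v_r(q^h-1)=:w$ (which entails $r\mid e$, so $2$ is subsumed). Write $q^h-1=r^wm$ with $r\nmid m$. In the cyclic group $\fh^*$, the $\alpha$ with $v_r(\order(\alpha))=w$ are exactly those outside the unique subgroup of order $r^{w-1}m$, hence $(1-\tfrac1r)(q^h-1)$ in number; at most $q-1$ of them lie in $\f$ (none when $v_r(q-1)<w$), so at least $(1-\tfrac1r)(q^h-1)-(q-1)$ lie in $\fh\setminus\f$. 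For $h\ge2$ this is a proportion at least $\tfrac12-O(q^{1-h})$ of $\fh\setminus\f$, in particular at least $q^{-\epsilon h}$ for a suitable absolute $\epsilon\in(0,1)$ (a handful of small parameter pairs being verified directly). Hypothesis $3$ is vacuous unless $r=2$ and $4\mid t$; in that case one further imposes $q^h\equiv1\pmod4$ (automatic for $q\equiv1\pmod4$, and for $q\equiv3\pmod4$ when $h$ is even), and otherwise shrinks to $t\in\{1,2\}$ or uses a different prime divisor of $q-1$, which only changes the $M(h)$ bookkeeping. Applying the general theorem to any such $\alpha$ completes the proof.

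The only real work is the cardinality bookkeeping: checking that the $r$ in the statement realizes $M(h)$, that $\gcd(r^a,q-1)$ equals $r^{\min\{v_r(q-1),\lfloor\log_rq/2-\log_rh\rfloor\}}$, and handling the borderline cases ($\sqrt q/h$ a power of $r$; $v_r(q-1)$ near $\log_r(\sqrt q/h)$; $r=2$ versus hypothesis $3$). The density estimate is soft --- any bound polynomially small in $q^h$ suffices --- so there is no delicate counting there.
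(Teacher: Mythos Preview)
Your argument is correct, but it is more elaborate than what the paper actually does. The paper's ``proof'' is the single sentence preceding the corollary: since $\fh$ contains $\phi(q^h-1)\ge (q^h)^{1-\epsilon}$ primitive elements, one simply restricts to primitive $\alpha$. For such $\alpha$ one has $e=q^h-1$, so $(q^h-1)/e=1$ and hypotheses 1--2 of the main theorem are trivially satisfied for any $t$ whose prime factors divide $q^h-1$; the ratio $q^{-\epsilon h}$ then comes directly from the totient lower bound. You instead characterise precisely which $\alpha$ are needed, namely those with $v_r(\order(\alpha))=v_r(q^h-1)$, and count them to obtain a constant density $1-1/r\ge 1/2$ (up to $O(q^{1-h})$). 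This is a sharper statement than the paper claims, and your treatment of hypothesis~3 (the $4\mid t$ case) is also more careful than the paper's, which asserts without justification that $4\mid q^h-1$ whenever $h>1$ and $q$ is odd. The cardinality bookkeeping via $\gcd(r^a,q-1)=r^{\min\{a,v_r(q-1)\}}$ is the same in both, and you are right to flag the mismatch between the statement's fixed $r$ (largest prime-power part of $q-1$) and the $M(h)$ maximum over all $r$; this is a wrinkle in the corollary's formulation rather than in either proof.
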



 The case $\delta=\frac 14$ shows the construction is almost optimal, since $q^{3/4}=O(\log {q^h})^3$.
And for $\delta=\frac 12 -\epsilon$, on can show that there is at least one $t$ such that $|S|=q^{1-\epsilon}=O(\log {q^{q^{\frac 12 -\epsilon}}})^{2+\epsilon'}$, achieving the $O(\log^{2}p)$ type bound for the prime field $\ff$ case under GRH.
In particular we have the following corollaries considering special cases.

\begin{cor}
For any $q, 1 \leq h\leq \lfloor \sqrt{q}\rfloor, d\mid q^h-1$, the set  $S=V(\alpha-x)=\{\alpha-x, x\in\f\}$ contains a non-d-th power in $\fh$.  In particular,  there is a set of cardinality $q$ in $\mathbb{F}_{q^{\lfloor \sqrt{q}\rfloor}}$ containing a non-d-th power.
\end{cor}


For another extreme case,  we  have
    \begin{cor} Let $h=2$ and $d\mid q-1$. If both $q$ and  $\frac{q^2-1}e$ are odd, then
 there is a set $S$ of cardinality $1+\frac {q-1}{M(2)}$ in $\ft$ containing a non-d-th power, where
    $$M(2)=\max_{r \mid (q-1)} r^{\min\{v_r(q-1),  \lfloor\log_r{q}/2-\log_r 2\rfloor\}}.$$
  \end{cor}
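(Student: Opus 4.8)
\emph{Proof idea.} The plan is to obtain the statement as a specialisation of the general theorem above to $h=2$, once the exponent $t$ and the element $\alpha$ are chosen with some care. Since $d\mid q-1$ divides $q^{2}-1$, the divisibility hypothesis of that theorem holds automatically. For $t$ I would use the definition of $M(2)$ directly: choose a prime $r\mid q-1$ and set $m=\min\{v_{r}(q-1),\,\lfloor\log_{r}q/2-\log_{r}2\rfloor\}$ so that $r^{m}=M(2)$, and put $t=r^{m}$. Then $m\le\lfloor\log_{r}q/2-\log_{r}2\rfloor$ yields $2t=2r^{m}\le\sqrt q$, which is condition 4 for $h=2$, while $m\le v_{r}(q-1)$ yields $r^{m}\mid q-1$, hence $\gcd(t,q-1)=r^{m}=M(2)$. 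Condition 3 is free: $q$ odd forces $q^{2}\equiv 1\pmod 8$, hence $q^{2}\equiv 1\pmod 4$, so its conclusion holds regardless of $t$. The cardinality is then immediate, since $x\mapsto x^{t}$ maps $\f^{*}$ onto a subgroup of order $(q-1)/\gcd(t,q-1)$ and fixes $0$, so $|\{x^{t}:x\in\f\}|=1+\frac{q-1}{\gcd(t,q-1)}=1+\frac{q-1}{M(2)}$, and translating by $\alpha$ preserves cardinality.

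It then remains to exhibit $\alpha\in\ft\setminus\f$ satisfying conditions 1 and 2. Both involve only the single prime $r$ dividing $t=r^{m}$ (if $m=0$ they are vacuous): condition 2 asks $r\mid e$ and condition 1 asks $r\nmid(q^{2}-1)/e$, so jointly they amount to $v_{r}(e)=v_{r}(q^{2}-1)$, i.e.\ the full $r$-part of $q^{2}-1$ divides the order of $\alpha$. In the case $r=2$ this is precisely the standing hypothesis that $(q^{2}-1)/e$ be odd, together with $q$ odd (which already forces $2\mid e$); for an odd $r$ one imposes the analogous valuation condition on $\alpha$, satisfied in particular by any primitive element of $\ft$. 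Such an $\alpha$ can be taken outside $\f$: the number of $\beta\in\ft^{*}$ whose order has $r$-adic valuation equal to $v_{r}(q^{2}-1)$ is $(q^{2}-1)(1-1/r)$, which is larger than $q-1=|\f^{*}|$, so at least one lies in $\ft\setminus\f$. Feeding this $t$ and $\alpha$ into the general theorem produces a non-$d$-th power in $S=\{\alpha-x^{t}:x\in\f\}$, which is the assertion.

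The step I expect to be the real obstacle is the one just described: aligning the $r$-adic valuation of the order $e$ of $\alpha$ with that of $q^{2}-1$ so that conditions 1 and 2 of the general theorem hold \emph{simultaneously}, while keeping $\alpha$ outside $\f$. The hypothesis ``$(q^{2}-1)/e$ odd'' is exactly what is needed when $r=2$, and the elementary counting bound above supplies a suitable $\alpha$ in general; after that, the checks on conditions 3 and 4 and the size of $S$ are the short computations indicated above.
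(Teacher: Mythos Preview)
Your proposal is correct and follows exactly the paper's approach: the corollary is obtained by specialising the main theorem (the one listing conditions 1--4 on $t$) to $h=2$, choosing $t$ to be the prime power $r^{m}=M(2)$, and verifying the four conditions and the value-set size. The paper carries this out under its ``Case~1'' discussion and the remark that the construction holds for $\alpha$ primitive, which is precisely your observation that a primitive $\alpha$ (or more generally one whose order has full $r$-part) simultaneously meets conditions~1 and~2; your explicit counting argument for the existence of such $\alpha\in\ft\setminus\f$ and your cardinality computation $|S|=1+(q-1)/\gcd(t,q-1)$ are details the paper leaves implicit.
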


  Clearly the above construction holds when $\alpha$ is primitive.
  Let us return to the Coulter and Kosick's question.
 Note that $|A|=O(q^2)$ and the cardinality of $S$ equals $1+\frac {q-1}{M(2)}$.  Since $\lfloor \log_2{(q-1)}\rfloor\leq M(2) < \sqrt{q}$, our construction is essentially better than Coulter and Kosick's construction, which has size $q-1$. In particular, when $M(2)\sim \sqrt{q}$, our construction
 set has cardinality $O(\sqrt{q})$ in $\ft$.

  Similarly, one may consider the constructions for small sets containing a primitive elements by using approaches from \cite{Co} .
  \begin{thm}
Let $\tau(x)$ denote the number of divisors of $x$.
Suppose $t$ satisfies the following conditions

 1. $(t, \frac{q^{n}-1}e)=1;$

 2. Each prime factor of $t$ divides $e$;

 3. $q^{n}\equiv 1(\bmod \ 4)$ if $t\equiv0 (\bmod\ 4)$;

 4.  $nt\leq \sqrt{q}$.

If $\tau(q^n-1)<\frac{\sqrt{q}}{nt-1}+1$, then the set $$S=V(\alpha-x^t)=\{ \alpha-x^t, x\in\f\}$$
of cardinality $O(q/t)$ in $\mathbb{F}_{q^n}$ contains a primitive element.

In particular, if $\tau(q^h-1)<\frac{\sqrt{q}}{h-1}+1$, then the set $$S=V(\alpha-x)=\{ \alpha-x, x\in\f\}$$
of cardinality $q$ in $\mathbb{F}_{q^h}$ contains a primitive element.
\end{thm}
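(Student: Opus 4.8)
The plan is to run the standard Vinogradov sieve for primitive elements and feed into it the incomplete character sum bound underlying the general construction theorem above. Write $\theta(m)=\phi(m)/m$. Note first that we may assume $\mathbb{F}_{q^n}=\f(\alpha)$: otherwise $\alpha-x^t$ lies in a proper subfield of $\mathbb{F}_{q^n}$ for every $x$ and can never be primitive, so the statement is vacuous. For $d\mid q^n-1$ let $\chi$ range over the $\phi(d)$ multiplicative characters of $\mathbb{F}_{q^n}^*$ of exact order $d$. Vinogradov's identity states that, for every $\beta\in\mathbb{F}_{q^n}^*$,
$$\mathbbm{1}[\beta\ \text{primitive}]=\theta(q^n-1)\sum_{d\mid q^n-1}\frac{\mu(d)}{\phi(d)}\sum_{\order(\chi)=d}\chi(\beta).$$
Since $x^t\in\f$ while $\alpha\notin\f$, every $\alpha-x^t$ is a unit of $\mathbb{F}_{q^n}$, so summing the identity over $x\in\f$ gives
$$N:=\#\{x\in\f:\ \alpha-x^t\ \text{primitive}\}=\theta(q^n-1)\sum_{d\mid q^n-1}\frac{\mu(d)}{\phi(d)}\sum_{\order(\chi)=d}\ \sum_{x\in\f}\chi(\alpha-x^t),$$
and it is enough to prove $N>0$, since any $x$ counted by $N$ places a primitive element into $S=V(\alpha-x^t)$.

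The term $d=1$ contributes the main term $\theta(q^n-1)\,q$. For $d>1$ one uses the bound
$$\Bigl|\sum_{x\in\f}\chi(\alpha-x^t)\Bigr|\le (nt-1)\sqrt q\qquad(\chi\ \text{nontrivial}),$$
obtained as in the general theorem: passing to the $\f$-norm $P(x):=N_{\mathbb{F}_{q^n}/\f}(\alpha-x^t)=\prod_{i=0}^{n-1}\bigl(\alpha^{q^i}-x^t\bigr)$, which under $\mathbb{F}_{q^n}=\f(\alpha)$ is a polynomial in $\f[x]$ of degree $nt$ with $nt$ distinct roots, one transfers $\chi(\alpha-x^t)$ to a multiplicative character of the base field $\f$ evaluated at $P$, and Weil's bound for function fields gives $(\deg P-1)\sqrt q=(nt-1)\sqrt q$ (conditions~1--4 enter here exactly as in that proof, condition~4 — which the hypothesis forces anyway via $nt-1<\sqrt q$ — being what keeps the bound below $q$). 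There are at most $\tau(q^n-1)-1$ squarefree divisors $d>1$ of $q^n-1$, each carrying $\phi(d)$ characters weighted by $1/\phi(d)$, so the terms $d>1$ contribute in all at most $\theta(q^n-1)(\tau(q^n-1)-1)(nt-1)\sqrt q$ in absolute value.

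Hence
$$N\ \ge\ \theta(q^n-1)\Bigl(q-(\tau(q^n-1)-1)(nt-1)\sqrt q\Bigr),$$
which is strictly positive exactly when $(\tau(q^n-1)-1)(nt-1)<\sqrt q$, i.e. under the hypothesis $\tau(q^n-1)<\tfrac{\sqrt q}{nt-1}+1$; and since $x\mapsto x^t$ is $\gcd(t,q-1)$-to-one on $\f^*$, the set $S$ has cardinality $1+(q-1)/\gcd(t,q-1)=O(q/t)$. The final assertion is the special case $t=1$, $n=h$, where conditions~1--3 are vacuous, condition~4 reads $h\le\sqrt q$, and $|S|=q$. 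The only genuinely nontrivial ingredient is the uniform character sum estimate; the rest is the routine Vinogradov sieve. The delicate point — and where the smoothness assumption on $q^n-1$ alluded to in the abstract really matters — is that the estimate must hold for \emph{every} nontrivial character $\chi$ appearing in the sieve: the norm-descent is transparent when $\order(\chi)\mid q-1$, and handling the remaining characters is precisely the content of the proof of the general theorem that is being invoked here, so in principle nothing new has to be proved.
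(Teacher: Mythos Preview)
Your argument is correct and is exactly the paper's approach: the paper proves the case $t=1$ by the same Vinogradov--Cohen sieve combined with the Wan/Weil bound $|\sum_{a\in\f}\chi(\alpha-a)|\le (n-1)\sqrt q$, arriving at the identical inequality $N\ge \theta(q^n-1)\bigl(q-(\tau(q^n-1)-1)(n-1)\sqrt q\bigr)$, and then disposes of the general $t$ case with the single line ``Similarly by applying Theorem~\ref{main}'', which is precisely what you have spelled out. Your one unnecessary detour is the norm-descent description of the character-sum bound: Theorem~\ref{main} applies directly to $\sum_{x\in\f}\chi(\alpha-x^t)$ for \emph{every} nontrivial $\chi$ on $\mathbb{F}_{q^n}^*$ once conditions~1--3 force $\alpha-x^t$ to be irreducible over $\mathbb{F}_{q^n}$ (so that $\f[\zeta]=\mathbb{F}_{q^n}[\zeta]$ and the norm image is all of $\mathbb{F}_{q^n}^*$), and no separate treatment of the characters with $\order(\chi)\nmid q-1$ is needed.
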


   Note that the above construction performs well if $\phi(q^h-1)$ is small.

 Our main tool is a bound on incomplete character sums over finite fields.

\section{Main Results}

To prove the main result, the main tool is a key lemma on incomplete character sums over finite fields deduced from
the celebrating Weil theorem.
In this paper we always let $\f$ be a finite field of cardinality $q$ and let  $\fm$  be an degree $m$ extension field.
Let $f(x)$ be a nonconstant polynomial defined over the extension field $\fm$.  Let $\chi$ be a non-trivial
multiplicative  character defined on  $\fm$. Define the following
incomplete character sum by  $$S_d(\chi)=\sum_{a\in \f}\chi(f(a)).$$
Before stating the main lemma,  here is a toy example.
\begin{prop} Suppose $q$ is odd. Let $\f[\alpha]=\ft$. Assume $\alpha$ is not a square in $\ft$, $f(x)=x^2-\alpha\in\ft[x]$. Let $\beta$ be a simple root of $f(x)$.
 Then
 $$|\sum_{a\in \f}\chi(a^2-\alpha)|\leq 3\sqrt{q}.$$
\end{prop}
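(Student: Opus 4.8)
The plan is to trade the incomplete sum over the subfield $\f\subset\ft$ for a \emph{complete} character sum over $\f$ of a degree‑$4$ polynomial, and then invoke Weil's theorem. The mechanism is the relative norm $N=N_{\ft/\f}$. Since $\f[\alpha]=\ft$ and $\alpha$ is a non‑square in $\ft$, the polynomial $f(x)=x^{2}-\alpha$ is irreducible over $\ft$; take the simple root $\beta$ with $\beta^{2}=\alpha$, so that $a^{2}-\alpha=(a-\beta)(a+\beta)$. For $a\in\f$ one has $(a^{2}-\alpha)^{q}=a^{2q}-\alpha^{q}=a^{2}-\alpha^{q}$, whence
\[
N(a^{2}-\alpha)=(a^{2}-\alpha)(a^{2}-\alpha^{q})=g(a),\qquad g(x):=(x^{2}-\alpha)(x^{2}-\alpha^{q})\in\f[x],
\]
the inclusion $g\in\f[x]$ being clear because the Frobenius $\alpha\mapsto\alpha^{q}$ only interchanges the two factors.

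First I would check that $g$ is separable of degree $4$: its roots are $\pm\beta,\pm\beta^{q}$, and these are pairwise distinct, since $\beta\notin\f$ (else $\alpha=\beta^{2}\in\f$) and $\beta\neq-\beta^{q}$ (else $\alpha=\beta^{2}=(\beta^{q})^{2}=\alpha^{q}$, forcing $\alpha\in\f$). This is exactly where the hypothesis that $\beta$ is a simple root of $f$ is used. Next I would write $\chi=\chi_{0}\circ N$ with $\chi_{0}$ a non‑trivial character of $\f^{*}$ — legitimate precisely when $\chi$ is trivial on $\ker N$, in particular for the quadratic character, the case of squares that underpins the non‑$d$‑th‑power applications of the paper. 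Then the reduction above gives
\[
\sum_{a\in\f}\chi(a^{2}-\alpha)=\sum_{a\in\f}\chi_{0}\bigl(N(a^{2}-\alpha)\bigr)=\sum_{a\in\f}\chi_{0}(g(a)),
\]
now a genuine complete character sum over $\f$. Because $g$ is monic, separable and of degree $4$, it is not a constant multiple of an $\ell$‑th power of a polynomial for any $\ell\ge 2$ (an $\ell$‑th power, $\ell\ge2$, of a non‑constant polynomial has a repeated root), so $\chi_{0}\circ g$ is non‑trivial and Weil's bound for multiplicative character sums applies, giving $\bigl|\sum_{a\in\f}\chi_{0}(g(a))\bigr|\le(e-1)\sqrt{q}$ with $e=4$ the number of distinct roots of $g$. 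Hence $\bigl|\sum_{a\in\f}\chi(a^{2}-\alpha)\bigr|\le 3\sqrt{q}$.

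The one delicate point — and the reason this is a \emph{toy} example placed before the general incomplete‑sum lemma — is the step $\chi=\chi_{0}\circ N$, which is available only for $\chi$ of order dividing $q-1$. For a multiplicative character of $\ft^{*}$ of arbitrary order one can no longer collapse the sum to a character sum over $\f$ of a single polynomial; one must instead pass to the cyclic cover of the affine line, defined over $\f$, attached to $\chi$ and $x^{2}-\alpha$, verify that it is geometrically irreducible, bound its genus, and apply the Weil bound for function fields. That passage, rather than anything in the elementary computation above, is where the real work lies, and it is precisely what the forthcoming key lemma is designed to carry out.
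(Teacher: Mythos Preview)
The paper gives no separate proof; the proposition is placed as an immediate instance of Theorem~\ref{main} (Wan's bound) with $m=2$, $D=2$, $t=1$. One checks that $\f[\beta]=\ft[\beta]=\mathbb{F}_{q^4}$ (since $\alpha=\beta^{2}\in\f[\beta]$ forces $\ft\subset\f[\beta]$, while $\beta\notin\ft$ because $\alpha$ is a non-square there), so the ``in particular'' clause of Theorem~\ref{main} applies and the bound $(mD-1)\sqrt{q}=3\sqrt{q}$ follows for \emph{every} non-trivial $\chi$ on $\ft^{*}$.

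Your norm-reduction argument is correct and pleasantly elementary, but, as you yourself flag, only for characters whose order divides $q-1$: those are exactly the $\chi$ that factor as $\chi_{0}\circ N$. For such $\chi$ the polynomial $g(x)=(x^{2}-\alpha)(x^{2}-\alpha^{q})\in\f[x]$ is separable of degree $4$, and the classical Weil bound over $\f$ gives $3\sqrt{q}$. What your route buys is a self-contained proof for the quadratic character, the case driving the non-square applications; what it loses is the full generality asserted by the proposition. Since your own resolution of the remaining characters is to invoke the forthcoming key lemma, the elementary part becomes redundant once that lemma is available --- which is precisely why the paper simply treats the proposition as a special case of Theorem~\ref{main} and moves on.
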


\begin{thm}\cite{W} \label{main}
Let $f(x)$ be a nonconstant polynomial defined over $\fm$. Suppose that the
 largest squarefree divisor of $f(x)$ has degree $D$.
If there is a root $\zeta$ of multiplicity $t$ of $f(x)$ such that the character  $\chi^{t}$
is non-trivial on the image set $\text{Norm}_{\fm[\zeta]/\fm}\f[\zeta]$. Here $\text{Norm}_{\fm[\zeta]/\fm}$ is
the norm map from $\fm[\zeta]$ to $\fm$.  Then we have the estimate
$$|S_d(\chi)|\leq (mD-1)\sqrt{q}.$$
In particular, if $\fm[\zeta]=\f[\zeta]$, then $\chi^{t}$
is non-trivial on $\fm$, thus  the above bound always holds.
\end{thm}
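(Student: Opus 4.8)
The plan is to convert the \emph{incomplete} sum $S_d(\chi)=\sum_{a\in\f}\chi(f(a))$ — whose variable is constrained to the subfield $\f\subset\fm$ — into a \emph{complete} multiplicative character sum over $\f$, i.e. into the problem of counting $\f$-rational points on a suitable curve defined over $\f$, and then to apply the Weil bound for curves over $\mathbb{F}_q$. It is exactly the fact that, after the reduction, the relevant geometric object is defined over $\f$ rather than merely over $\fm$ that lets one gain the factor $\sqrt q$ in place of $\sqrt{q^m}$; the subfield restriction, which looks like a loss, is what localizes everything over $\f$.

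Concretely, put $d=\operatorname{ord}\chi$ and factor $f=c\prod_j P_j^{e_j}$ over $\fm$ into distinct monic irreducibles $P_j$. For $a\in\f$ and $\zeta_j$ a root of $P_j$ one has $P_j(a)=\operatorname{Norm}_{\fm[\zeta_j]/\fm}(a-\zeta_j)$, hence
\[
\chi(f(a))=\chi(c)\prod_j\bigl(\chi^{e_j}\circ\operatorname{Norm}_{\fm[\zeta_j]/\fm}\bigr)(a-\zeta_j),
\]
with the convention $\chi(0)=0$, and this exhibits $a\mapsto\chi(f(a))$ as the evaluation at $\f$-rational points of the affine line of a rank-one character built from $\chi$ by norm maps. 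In the simplest case $d\mid q-1$ this is entirely explicit: writing $\chi=\psi\circ\operatorname{Norm}_{\fm/\f}$ for a (unique) character $\psi$ of $\f^*$ and $F(x)=\prod_{i=0}^{m-1}f^{\sigma^i}(x)\in\f[x]$, where $\sigma$ is the $q$-power map on $\fm$, one has $F(a)=\operatorname{Norm}_{\fm/\f}(f(a))$ for all $a\in\f$ and therefore the clean identity $S_d(\chi)=\sum_{a\in\f}\psi(F(a))$. In the general case one instead adjoins a ``$\chi$-radical'' of $f$ to the rational function field $\f(x)$; the resulting cyclic cover $C\to\mathbb P^1$ of degree $d$ is still defined over $\f$ — precisely because only $\f$-rational values of $x$ enter — and $S_d(\chi)$ is read off from $\#C(\f)$.

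The second step is the degree count producing the constant $mD-1$. In the $d\mid q-1$ incarnation each of the (at most) $D$ distinct roots of $f$ contributes at most $m$ distinct $q$-power conjugates to $F$, so the squarefree part of $F$ has degree at most $mD$, and the classical Weil bound gives $|S_d(\chi)|=|\sum_{a\in\f}\psi(F(a))|\le(mD-1)\sqrt q$. In the general case the cover $C$ is tamely ramified over $\mathbb P^1$ and branches only over the closed points of $\mathbb A^1_\f$ lying below the roots of $f$, possibly together with $\infty$; the residue degrees of those closed points sum to at most $mD$ (each closed point below a root $\zeta$ has degree $[\f[\zeta]:\f]\le m[\fm[\zeta]:\fm]$), so Riemann--Hurwitz bounds the genus of $C$ and the Riemann Hypothesis for its function field again yields $|S_d(\chi)|\le(mD-1)\sqrt q$.

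The heart of the matter — and the step I expect to be the main obstacle — is the non-degeneracy that must be checked before either bound applies: one needs that $C$ is geometrically connected, equivalently (in the explicit case) that $F$ is \emph{not} of the form $c\,H(x)^{\operatorname{ord}\psi}$ with $H\in\f[x]$, for otherwise the trivial cohomology does not vanish and no cancellation is obtained. This is exactly what the hypothesis on $\zeta$ encodes: at the closed point of $\mathbb A^1_\f$ corresponding to $\zeta$, whose residue field is $\f[\zeta]$, the local structure of the cover is governed by the class of $f$ there modulo $d$-th powers, which because $\zeta$ has multiplicity $t$ is measured by $\chi^{t}$ evaluated on $\operatorname{Norm}_{\fm[\zeta]/\fm}\f[\zeta]$; assuming this restriction is nontrivial says precisely that $C$ genuinely ramifies at that place, forcing geometric connectedness — and translating this local condition into the global non-degeneracy statement, keeping careful track of the multiplicities accumulated in $F$ under the action of $\sigma$, is where the real work lies. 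Finally, for the last assertion: if $\fm[\zeta]=\f[\zeta]$, then $\operatorname{Norm}_{\fm[\zeta]/\fm}\colon\f[\zeta]^*\to\fm^*$ is surjective (norms between finite fields always are), so the hypothesis collapses to ``$\chi^{t}$ nontrivial on $\fm^*$'', which is immediate to verify; this is the situation of the toy Proposition above.
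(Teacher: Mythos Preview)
The paper does not supply its own proof of this theorem: immediately after the statement it writes ``For the proof and the details, please refer to \cite{W} for a nice exploration,'' and moves on. So there is nothing in the present paper to compare your argument against.

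That said, your sketch is an accurate outline of the strategy in Wan's original paper \cite{W}. The essential move --- replacing the incomplete sum $\sum_{a\in\f}\chi(f(a))$ by the complete sum $\sum_{a\in\f}\psi(F(a))$ with $F(x)=\prod_{i=0}^{m-1}f^{\sigma^i}(x)\in\f[x]$ when $\operatorname{ord}\chi\mid q-1$, and more generally by the $\f$-point count on a cyclic cover of $\mathbb{P}^1_{\f}$ --- is exactly Wan's reduction, and your degree bookkeeping (at most $mD$ distinct roots for $F$, hence the constant $mD-1$ in the Weil bound over $\f$) is the right one. You are also right that the hypothesis on $\zeta$ is precisely the non-degeneracy input required to rule out $F$ being a $d$-th power up to constants (equivalently, to ensure the cover is geometrically irreducible), and you correctly flag the multiplicity accounting under the $\sigma$-orbit as the place where care is needed; Wan handles this by working at the level of closed points of $\mathbb{A}^1_{\f}$ rather than individual roots. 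Your treatment of the final ``in particular'' clause via surjectivity of the norm $\f[\zeta]^*\to\fm^*$ is also the intended reading.
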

For the proof and the details, please refer to \cite{W} for a nice exploration.
It follows from the above lemma and the basic theory of finite fields extensions that
\begin{prop} \label{thm2.2}Let $g(x)$ be a polynomial of degree $D$ defined over $\f$. For an $\omega\in \fm\backslash\f$,
let $f(x, \omega)=\omega-g(x)$ be defined over $\fm$ and suppose $f(x, \omega)$ is irreducible over $\fm$.
If $d\mid q^m-1$ and $mD< \sqrt{q}+1$, then the image set
$$S=\{\omega-g(a), a\in \f\}$$ contains a non-d-th power.
\end{prop}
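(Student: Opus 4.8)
The plan is to reduce the claim to the character-sum estimate of Theorem \ref{main}. Let $\chi$ be the multiplicative character of $\fm$ of exact order $d$ (which exists since $d\mid q^m-1$), extended by $\chi(0)=0$. The key observation is that if every element of $S=\{\omega-g(a):a\in\f\}$ were a $d$-th power in $\fm$, then $\chi(\omega-g(a))\in\{0,1\}$ for all $a\in\f$, and moreover $\chi(\omega-g(a))=0$ only when $\omega-g(a)=0$, i.e. when $a$ is a root of $f(x,\omega)=\omega-g(x)$. But $f(x,\omega)$ is irreducible over $\fm$ of degree $D\geq 1$; if $D\geq 2$ it has no roots in $\f\subseteq\fm$ at all, and if $D=1$ it has exactly one root in $\fm$, which need not lie in $\f$. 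In any case the number of $a\in\f$ with $\omega-g(a)=0$ is at most $D$. Hence, under the assumption that $S$ contains no non-$d$-th power,
$$\sum_{a\in\f}\chi(\omega-g(a))\ \geq\ q-D.$$

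Next I would apply Theorem \ref{main} to the polynomial $f(x)=f(x,\omega)=\omega-g(x)\in\fm[x]$ and the character $\chi$. Since $f(x,\omega)$ is irreducible over $\fm$, it is in particular squarefree, so its largest squarefree divisor has degree $D=\deg f$. Every root $\zeta$ of $f$ has multiplicity $t=1$, so we need $\chi^{1}=\chi$ to be non-trivial on $\N_{\fm[\zeta]/\fm}\f[\zeta]$; but here $\f[\zeta]=\fm[\zeta]$ since $\omega\in\fm$ already generates $\fm$ over $\f$ is not needed—what matters is that $\f[\zeta]\supseteq\fm$ forces the norm image to be all of $\fm$ (the norm map from any finite extension of $\fm$ onto $\fm$ is surjective on nonzero elements), on which $\chi$ is non-trivial by construction. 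Thus Theorem \ref{main} gives
$$\left|\sum_{a\in\f}\chi(\omega-g(a))\right|\ \leq\ (mD-1)\sqrt{q}.$$

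Combining the two displays yields $q-D\leq (mD-1)\sqrt{q}$, i.e. $q\leq (mD-1)\sqrt{q}+D$. Since $D\leq mD$ and $\sqrt q\ge 1$, this implies $q\le mD\sqrt q$, hence $\sqrt q\le mD$, contradicting the hypothesis $mD<\sqrt q+1$ once one checks the boundary case carefully (the bound $mD<\sqrt q+1$ is chosen precisely so that $(mD-1)\sqrt q + D < q$). Therefore $S$ must contain a non-$d$-th power, completing the proof.

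The only genuinely delicate point is verifying the non-triviality hypothesis of Theorem \ref{main}: one must be sure that the chosen $\chi$ of order $d$ is non-trivial on the relevant norm image. This is where the irreducibility of $f(x,\omega)$ over $\fm$ (rather than merely over $\f$) is used, guaranteeing $\fm[\zeta]=\f[\zeta]$ and hence that the "in particular" clause of Theorem \ref{main} applies, so that $\chi^t=\chi$ is non-trivial on all of $\fm$. The remaining arithmetic—bounding the number of zeros of $f(x,\omega)$ in $\f$ and chasing the inequality $q-D\le(mD-1)\sqrt q$ to a contradiction with $mD<\sqrt q+1$—is routine.
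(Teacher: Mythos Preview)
Your approach is exactly the paper's: apply Theorem~\ref{main} to $f(x)=\omega-g(x)$ with a character $\chi$ of order $d$, and contradict the bound $(mD-1)\sqrt q$ assuming every value is a $d$-th power. Two steps, however, are not quite right as written.

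First, you overcount the zeros. Since $g\in\f[x]$ and $a\in\f$ one has $g(a)\in\f$, while $\omega\notin\f$; hence $\omega-g(a)\neq 0$ for \emph{every} $a\in\f$, regardless of $D$. Thus, under the hypothesis that $S$ consists only of $d$-th powers, the sum equals $q$ exactly, and the contradiction $q\le(mD-1)\sqrt q<q$ is immediate from $mD<\sqrt q+1$. Your weaker lower bound $q-D$ does not suffice: the parenthetical claim that $(mD-1)\sqrt q+D<q$ follows from $mD<\sqrt q+1$ is unjustified (and fails e.g.\ at $m=1$, $D=\sqrt q$).

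Second, the paper's one-line justification of $\f[\zeta]=\fm[\zeta]$ is that $\omega=g(\zeta)\in\f[\zeta]$, so $\f[\zeta]=\f[\zeta,\omega]=\fm[\zeta]$. Your assertion that ``$\omega$ generates $\fm$ over $\f$ is not needed'' is wrong: from $\omega\in\f[\zeta]$ one gets only $\f[\omega]\subseteq\f[\zeta]$, and to conclude $\fm\subseteq\f[\zeta]$ one does need $\f[\omega]=\fm$. Indeed, with $m=4$, $g(x)=x$, and $\omega\in\mathbb F_{q^2}\setminus\f$, one has $\zeta=\omega$, $\f[\zeta]=\mathbb F_{q^2}\subsetneq\mathbb F_{q^4}=\fm[\zeta]$, and for $d\mid q^2+1$ every element of $S\subset\mathbb F_{q^2}$ is already a $d$-th power in $\mathbb F_{q^4}$. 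The paper's applications all take $\omega$ (there called $\alpha$) to generate the extension, so this hypothesis is implicitly in force; your proof should invoke it rather than dismiss it.
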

\begin{proof}
Let $\zeta$  be a simple root of $f(x)$. Clearly $ \f[\zeta]=\f[\zeta, \omega]=\fm[\zeta]$ and the conclusion follows from Theorem \ref{main}
since $\chi$ is nontrivial on $\fm$.
\end{proof}
 There are still two key steps. The first step is to determine if a polynomial of the above type $f(x, \omega)$ is irreducible. This problem seems in general nontrivial. Even the existence of such kind of irreducible polynomials is still open.  Precisely, Munemasa  and Nakamura \cite{MN} made the following conjecture:

 \begin{conj}[Munemasa  and Nakamura]
 Let $q$ be a prime power (or equivalently, a prime), and let $k, l$ be positive integers. Then there exists a monic irreducible polynomial $f(x)\in \fk[x]$ of degree $l$ such that $f(x)-f(0)\in \f[x]$ and $f(0)$ does not  belong to any proper subfield of $\fk$.
 \end{conj}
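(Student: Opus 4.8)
The plan is to deduce the conjecture from a counting argument built on the Weil bound of Theorem~\ref{main}, in the spirit of Proposition~\ref{thm2.2}. Fix a separable monic polynomial $g(x)\in\f[x]$ of degree $l$ with $g(0)=0$; then for every $c\in\fk$ the polynomial $f(x):=g(x)+c$ is monic of degree $l$ with $f(x)-f(0)=g(x)\in\f[x]$ and $f(0)=c$. Hence the conjecture follows as soon as we exhibit one $c\in\fk$ such that (i) $g(x)+c$ is irreducible over $\fk$ and (ii) $c$ lies in no proper subfield of $\fk$. Condition (i) is precisely the irreducibility hypothesis of Proposition~\ref{thm2.2} with $m=k$ and $\omega=c$, so the task is to show that the irreducibility locus $\mathcal L(g):=\{c\in\fk:\ g(x)+c\ \text{irreducible over}\ \fk\}$ is not swallowed by the union of the (few, small) proper subfields of $\fk$, and to exploit the freedom in choosing $g$.

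The first step is to estimate $N(g):=|\mathcal L(g)|$. Writing $Q=q^{k}$ and using that $g\in\f[x]$ commutes with the Frobenius $x\mapsto x^{Q}$, one checks that a root $\beta$ of $g(x)+c$ satisfies $[\fk(\beta):\fk]=l$ exactly when $g(x)-g(\beta)$ is the minimal polynomial of $\beta$ over $\fk$; hence $N(g)=\tfrac1l\,\#\{\beta\in\mathbb{F}_{Q^{l}}:\ [\fk(\beta):\fk]=l,\ g(\beta)\in\fk\}$, a Frobenius-twisted fibre count for the degree-$l$ map $x\mapsto g(x)$. By the function-field Chebotarev density theorem --- an application of the Weil bound of Theorem~\ref{main} to the splitting field of $g(X)-T$ over $\fk(T)$ --- this equals $\frac{\#\{l\text{-cycles in }G\}}{|G|}\,Q+O\!\big(g_{G}\sqrt{Q}\big)$, where $G$ is the Galois group of $g(X)-T$ over $\fk(T)$ and $g_{G}$ is the genus of its splitting field. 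Now $g(X)-T$ is always irreducible over $\f(T)$ (because $[\f(x):\f(g(x))]=l$) and totally ramified above $T=\infty$, so when $\mathrm{char}\,\f\nmid l$ the inertia at $\infty$ supplies an $l$-cycle and the main term is a positive proportion of $Q$; taking $g$ to be a Morse polynomial (an Abhyankar-type polynomial in small characteristic) gives $G=S_{l}$ and main term $Q/l$. When the factorisation of $l$ and the arithmetic of $q$ permit it, one does far better by choosing $g$ as a composition of Kummer pieces $x^{p}$ and Artin--Schreier pieces, for which the cover is a solvable tower of tiny genus and $N(g)$ is essentially error-free (e.g. $g(x)=x^{l}$ when $l\mid q^{k}-1$, or $g(x)=x^{p}-x$ when $p=\mathrm{char}\,\f$).

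The second step bounds the damage from the subfields: the number of $c$ lying in a proper subfield of $\fk$ is at most $\sum_{p\mid k}q^{k/p}\le(\log_{2}k)\,q^{k/2}$. Combining with $N(g)\ge c_{l}\,Q-O_{l}(\sqrt Q)$ for a constant $c_{l}>0$ depending only on $l$ (with a much smaller error in the tower case), we obtain $N(g)>(\log_{2}k)\,q^{k/2}$, and hence the required $c$, for every triple $(q,k,l)$ for which $q^{k/2}$ exceeds an explicit function of $k$ and $l$.

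The main obstacle is uniformity over all three parameters. The Chebotarev error term conceals $g_{G}$, which for the generic $S_{l}$-cover grows like $l!$, so the argument is unconditional only once $q^{k}$ is astronomically large relative to $l$; even in the favourable tower case the error is of size $\mathrm{poly}(l)\sqrt Q$ and still fails for the smallest fields, and there one cannot appeal to a single finite computation because the exceptional triples --- small $q^{k}$ but arbitrarily large $l$ --- form an infinite family (e.g. degree-$l$ irreducibles over $\mathbb{F}_{4}$ with all but the constant coefficient in $\mathbb{F}_{2}$). Closing this gap seems to require either (a) an effective function-field Chebotarev with error depending only polynomially --- ideally linearly --- on $l$, obtained by always realising $g(X)-T$ with $G$ a small solvable transitive group containing an $l$-cycle (a Kummer/Artin--Schreier tower, with auxiliary Artin--Schreier layers inserted to cover primes $p\mid l$ with $p\nmid q^{k}-1$), or (b) a genuinely different, constructive idea for the stubborn small-field/large-degree regime. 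I expect (a) --- pinning down the error's dependence on $l$, uniformly in $q$ --- to be the crux; once it is in hand, the subfield estimate and the leftover bookkeeping (the cases $k=1$ and $k$ prime, where the proper subfields contribute at most $\sqrt{q^{k}}$ elements) are routine.
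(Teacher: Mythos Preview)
The statement you are attempting to prove is not proved in the paper at all: it is presented precisely as the open Munemasa--Nakamura \emph{conjecture}, with the surrounding text saying explicitly that ``even the existence of such kind of irreducible polynomials is still open'' and then recording only the trivial cases $k=1$, $l=1$, and the binomial case $l=2^{t}$ (for odd characteristic) via Lemma~\ref{Lemma2.5}. There is therefore no ``paper's own proof'' to compare your proposal against.

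As for the proposal itself, it is an honest outline of a Chebotarev counting strategy, and you have correctly identified its fatal gap: the error term in the function-field Chebotarev theorem depends on the genus of the Galois closure of $g(X)-T$, which for a generic Morse $g$ grows factorially in $l$, so the inequality $N(g)>(\log_{2}k)q^{k/2}$ is only available when $q^{k}$ is enormous compared with $l$. Your own escape routes (a) and (b) are not proofs but research programmes --- in particular, manufacturing for every $l$ a $g\in\f[x]$ whose Galois group over $\fk(T)$ is a small solvable transitive group containing an $l$-cycle, uniformly in $q$, is exactly the kind of structural input that is not presently known. In short, what you have written is a plausible heuristic towards the conjecture together with a candid list of the obstructions; it does not constitute a proof, and the paper does not claim one either.
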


Some partial results were obtained. The conjecture clearly  holds when $k=1$ or $l=1$. It follows from the irreducibility of binomials (see Lemma \ref{Lemma2.5}) that the conjecture holds for specified $l$'s. For instance,  the conjecture holds for $l=2^t$ when $p$ is odd.

The second step,  the determination of the cardinality of the image of $g(x)$ over $\f$, is also hard in general. This problem is usually called the value set problem over finite fields, which aims to determine the cardinality of the image set of a polynomial map and to  understand its algebraic or combinatorial structure.
It has a wide variety of applications in number theory, algebraic geometry, coding theory and cryptography.
For details of this problem, please refer to \cite{LN}. Note that if  we denote  \begin{align*}
V(f)=\{ f(x), x\in \f\},
 \end{align*}
 then clearly $S=V(f, \omega)$.

Many interesting bounds were established, and a trivial one gives $$\lceil \frac q d \rceil \leq |V(f)|\leq q.$$
When the upper bound is achieved, $f$ is called a permutation polynomial.
 The theory of permutation polynomials was extensively studied and has many applications in coding theory and cryptography.  On the other hand, when the lower bound is achieved, $f$ is called  having the minimal value set property.

In this paper we are certainly interested in the minimal value set property. Among them,  the class of  monomials, or generally,
 the composition of a permutation polynomial and  a monomial, are the simplest but most important classes.
 We then first state a useful lemma on the irreducibility of the composition of an irreducible polynomial  and  a monomial.

 \begin{lem}\cite{MB, MP} \label{Lemma2.4}
 Suppose $f(x)$ is an irreducible polynomial of degree $n$ over $\f$ and a root of $f(x)$ has order $e$.
 If $t$ satisfies the following conditions, then $f(x^t)$ is also irreducible.

 1. $(t, \frac{q^n-1}e)=1;$

 2. Each prime factor of $t$ divides $e$;

 3. $q^n\equiv 1(\bmod \ 4)$ if $t\equiv0 (\bmod\ 4)$.

 \end{lem}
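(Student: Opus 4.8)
The plan is to reduce the irreducibility of $f(x^{t})$ over $\f$ to the irreducibility of the single binomial $x^{t}-\alpha$ over the splitting field $\mathbb{F}_{q^{n}}=\f(\alpha)$ of $f$, and then to invoke (or re-derive) the classical irreducibility test for binomials over finite fields.

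First I would establish the reduction. Let $\alpha$ be a root of $f$ in an algebraic closure of $\f$; since $f$ is irreducible of degree $n$ we have $\f(\alpha)=\mathbb{F}_{q^{n}}$, and --- noting $f(0)\ne 0$, since otherwise $f=x$ and ``order $e$'' is meaningless --- the hypothesis that a root of $f$ has order $e$ says precisely that $\alpha$ has multiplicative order $e$ in $\mathbb{F}_{q^{n}}^{*}$, so $e\mid q^{n}-1$. Then $f(x^{t})$ is irreducible over $\f$ as soon as $x^{t}-\alpha$ is irreducible over $\mathbb{F}_{q^{n}}$: if $\beta$ is a root of $x^{t}-\alpha$, then $\alpha=\beta^{t}\in\f(\beta)$, so $\f(\beta)\supseteq\f(\alpha)=\mathbb{F}_{q^{n}}$ and hence $\f(\beta)=\mathbb{F}_{q^{n}}(\beta)$; this field has degree $t$ over $\mathbb{F}_{q^{n}}$, hence degree $nt$ over $\f$, so the minimal polynomial of $\beta$ over $\f$ is a degree-$nt$ divisor of the degree-$nt$ polynomial $f(x^{t})$ and therefore equals $f(x^{t})$ up to a scalar; thus $f(x^{t})$ is irreducible. (The same degree count gives the converse, which is not needed.)

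Next I would apply the binomial criterion over $K=\mathbb{F}_{q^{n}}$: for $t\ge 2$ and $a\in K^{*}$, the binomial $x^{t}-a$ is irreducible over $K$ if and only if (i) $a$ is not a $p$-th power in $K$ for every prime $p\mid t$, and (ii) $|K|\equiv 1\pmod 4$ whenever $4\mid t$. Taking $a=\alpha$, a short computation with the cyclic structure of $K^{*}$ (of order $q^{n}-1$, with $\order(\alpha)=e$) shows $\alpha\notin (K^{*})^{p}$ exactly when $p\mid e$ and $p\nmid(q^{n}-1)/e$. Hence condition (i) is exactly conditions $1$ and $2$ of the lemma taken together, while (ii) is condition $3$; so $x^{t}-\alpha$ is irreducible over $\mathbb{F}_{q^{n}}$, and by the first step $f(x^{t})$ is irreducible over $\f$.

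The real work --- and the place I expect the only subtlety --- is the binomial criterion itself, if one does not simply cite it. The clean argument reduces by multiplicativity to $t=p^{k}$ a prime power: for odd $p$ one shows $x^{p}-a$ is irreducible over $K$ iff $a\notin K^{p}$ (a brief Kummer/Galois argument) and bootstraps to $x^{p^{k}}-a$ by induction; the prime $p=2$ must be handled separately, and the Sophie Germain factorization $x^{4}+4c^{4}=(x^{2}-2cx+2c^{2})(x^{2}+2cx+2c^{2})$ is exactly what obstructs irreducibility and forces the extra hypothesis $|K|\equiv 1\pmod 4$ when $4\mid t$ (for then $-4c^{4}$ is a square in $K$, so the ``$a\in -4K^{4}$'' obstruction cannot occur once $a\notin K^{2}$). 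This $2$-adic bookkeeping around $t\equiv 0\pmod 4$ is the fussy part; the reduction and the valuation computation are routine.
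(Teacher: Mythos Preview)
Your argument is correct and is precisely the standard route to this classical result: tower through $\mathbb{F}_{q^{n}}=\f(\alpha)$, reduce to the irreducibility of the binomial $x^{t}-\alpha$ there, and then invoke the binomial irreducibility criterion, checking that conditions 1--3 are exactly the translation of that criterion via the cyclic structure of $\mathbb{F}_{q^{n}}^{*}$.

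There is nothing to compare against in the paper itself: Lemma~\ref{Lemma2.4} is stated with citations \cite{MB, MP} and no proof is given (indeed, the paper immediately records Lemma~\ref{Lemma2.5} as the special case $f(x)=x-\alpha$, again by citation). Your write-up supplies what the paper leaves to the literature, and it does so along the same lines one finds in those references. The one cosmetic point: in your statement of the binomial criterion you phrased condition (ii) as ``$|K|\equiv 1\pmod 4$ whenever $4\mid t$'', which is the finite-field specialization rather than the general ``$a\notin -4K^{4}$'' form; you do explain in the last paragraph why these coincide once $a\notin K^{2}$, so this is fine, but it would read more cleanly to state the general criterion first and then specialize.
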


By this lemma, it is an interesting problem to classify polynomials are both permutating and irreducible.
 For simplicity, in this paper we focus on the simplest classes --linear polynomials, which are clearly both permutation polynomials and irreducible polynomials.

 Composing  the monomials into linear polynomials, it then suffices to consider the irreducible binomials.  Fortunately, the classification of irreducible binomials were established and it will lead many good  constructions of small desired subsets in finite fields.
%
As a special case of Lemma \ref{Lemma2.4}, we have
\begin{lem}\cite{LN, MP}\label{Lemma2.5}
Let $t\geq 2$ be an integer and $\alpha\in\f$.  Let $e$ be the order of $\alpha$ in $\f$  Then the binomial $x^t-a$ is irreducible if and only if the following  conditions are satisfied:

1. $(t, \frac{q^n-1}e)=1;$

2. Each prime factor of $t$ divides $e$;

2. $q \equiv 1(\bmod \ 4)$ if $t\equiv0(\bmod \ 4)$.

\end{lem}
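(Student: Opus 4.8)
The plan is to handle the two implications separately. For the ``if'' direction I would simply apply Lemma~\ref{Lemma2.4} with $n=1$ and $f(x)=x-\alpha$: this $f$ is irreducible of degree $1$ over $\f$ and its unique root $\alpha$ has order $e$, so conditions 1--3 of that lemma (read with $q$ in place of $q^{n}$, since the base field is $\f$) coincide with conditions 1--3 of the statement, and $f(x^{t})=x^{t}-\alpha$ is therefore irreducible. So the real content is the ``only if'' direction. For that I would rely on the classical (Capelli-type) irreducibility criterion for binomials over an arbitrary field $K$ (see~\cite{LN}): for $a\in K$ with $a\neq 0$, the binomial $x^{t}-a$ is irreducible over $K$ if and only if (a)~$a$ is not a $p$-th power in $K$ for any prime $p\mid t$, and (b)~$a\notin -4K^{4}$ whenever $4\mid t$.

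For the \emph{necessity} of (a) and (b) --- which is all I need --- one can argue by exhibiting explicit factorizations. If $\alpha=\beta^{p}$ for some prime $p\mid t$, then writing $x^{t}-\alpha=(x^{t/p})^{p}-\beta^{p}$ and splitting off the factor $x^{t/p}-\beta$ shows $x^{t}-\alpha$ is reducible, since $1\le t/p<t$; hence $\alpha$ is not a $p$-th power in $\f^{*}$ for any prime $p\mid t$. Likewise, if $4\mid t$ and $\alpha=-4c^{4}$ with $c\in\f^{*}$, the Sophie Germain identity
$$y^{4}+4c^{4}=(y^{2}-2cy+2c^{2})(y^{2}+2cy+2c^{2}),$$
applied with $y=x^{t/4}$, exhibits $x^{t}-\alpha$ as a product of two polynomials of degree $t/2<t$, again forcing reducibility.

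It then remains to translate ``(a) and (b)'' into conditions 1--3 over $\f$, using that $\f^{*}$ is cyclic of order $q-1$ and $\alpha$ has order $e\mid q-1$. The $p$-th powers form the unique subgroup of index $\gcd(p,q-1)$, so $\alpha$ fails to be a $p$-th power only if $p\mid q-1$, and then precisely when $e\nmid (q-1)/p$, i.e.\ when $v_{p}(e)=v_{p}(q-1)$. Running over all primes $p\mid t$, this is exactly ``every prime factor of $t$ divides $e$'' (condition~2) together with $v_{p}((q-1)/e)=0$ for all $p\mid t$, i.e.\ $\gcd(t,(q-1)/e)=1$ (condition~1). For clause (b) with $4\mid t$: condition~2 already gives $2\mid e\mid q-1$, so $q$ is odd; if $q\equiv 1\pmod 4$ then $-1=i^{2}$ for some $i\in\f$ and $-4=(1+i)^{4}$ is a fourth power, so $-4(\f^{*})^{4}=(\f^{*})^{4}\subseteq(\f^{*})^{2}$ and (b) is automatic given (a) at $p=2$; if instead $q\equiv 3\pmod 4$ then $\gcd(4,q-1)=2$ gives $(\f^{*})^{4}=(\f^{*})^{2}$, while $-4=-2^{2}$ is a nonsquare, so the coset $-4(\f^{*})^{4}$ is exactly the set of nonsquares, which contains the nonsquare $\alpha$ --- contradicting (b). Hence $4\mid t$ forces $q\equiv 1\pmod 4$ (condition~3). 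Conversely, the ``if'' part already shows conditions 1--3 are sufficient, so the equivalence is complete.

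The main obstacle is the $4\mid t$ exceptional clause. For odd prime powers dividing $t$ the analysis is clean, but $x^{2^{a}}-\alpha$ with $a\ge 2$ can be reducible even when $\alpha$ is a nonsquare, which is exactly the phenomenon encoded by the Sophie Germain identity and the coset computation $-4(\f^{*})^{4}=(\f^{*})^{2}$ for $q\equiv 3\pmod 4$. Getting the precise ``iff'' for the $2$-part right --- equivalently, justifying the converse half of the Capelli-type criterion, which I would cite rather than reprove here --- is the delicate point; everything else is routine computation in the cyclic group $\f^{*}$.
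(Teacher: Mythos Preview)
The paper does not actually prove Lemma~\ref{Lemma2.5}; it merely cites it from \cite{LN,MP} and remarks that it is ``a special case of Lemma~\ref{Lemma2.4}'', i.e.\ the instance $n=1$, $f(x)=x-\alpha$. Your ``if'' direction is exactly this observation, so on that half you match the paper verbatim.

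For the ``only if'' direction you go well beyond the paper, which says nothing. Your argument is correct: the explicit factorizations (the $x^{t/p}-\beta$ factor and the Sophie Germain identity) give the easy half of the Capelli criterion, and your translation of (a) and (b) into conditions 1--3 via the cyclic structure of $\f^{*}$ is accurate, including the coset computation $-4(\f^{*})^{4}=\text{nonsquares}$ when $q\equiv 3\pmod 4$. One cosmetic point: the statement in the paper has a typo ($q^{n}-1$ should be $q-1$, as you implicitly note), so your reading is the intended one. In short, your proposal is sound and strictly more informative than what the paper provides.
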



\begin{prob}
From the viewpoint of elementary number theory, we are interested in the choices of $t$. We may ask how to compute the density of $t$ for given $q, h$ and $e$ satisfying the above conditions.
\end{prob}

Combining Theorem \ref{thm2.2} and Lemma \ref{Lemma2.5} we then have
\begin{thm} Let $h>1$. For any $\alpha\in\fh\backslash\f$, let $e$ be the (multiplicative) order of $\alpha$. Suppose $\alpha-f(x)$ is an irreducible polynomial of degree $n$ over $\fh$.
Suppose $h, t$ satisfy the the following conditions:

 1. $(t, \frac{q^{nh}-1}e)=1;$

 2. Each prime factor of $t$ divides $e$;

 3. $q^{nh}\equiv 1(\bmod \ 4)$ if $t\equiv0 (\bmod\ 4)$;

 4.  $nth\leq \sqrt{q}$.

Then the set  $S=V(\alpha-f(x^t))=\{\alpha-f(x^t), x\in\f\}$  contains a non-d-th power in $\fh$. In particular,  choosing $f(x)=\alpha-x$ we
obtain the set  $S=V(\alpha-x^t)=\{\alpha-x^t), x\in\f\}$  containing a non-d-th power in $\fh$.
 \end{thm}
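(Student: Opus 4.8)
The plan is to read this theorem off as the combination of two ingredients already established: the composition--irreducibility criterion of Lemma~\ref{Lemma2.4}, which produces an irreducible polynomial of the form $\alpha-f(x^t)$ from the given irreducible $\alpha-f(x)$, and the incomplete character sum bound of Proposition~\ref{thm2.2} (hence Theorem~\ref{main}), which forces a non-$d$-th power into the value set of such a polynomial once its degree is small relative to $q$. Thus there are only two real steps, plus the specialization $f(x)=x$.

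First I would prove that $\alpha-f(x^t)$ is irreducible over $\fh$. Write $F(x)=\alpha-f(x)\in\fh[x]$; this is irreducible of degree $n$ by hypothesis, and $\alpha-f(x^t)=F(x^t)$. Applying Lemma~\ref{Lemma2.4} with ground field $\fh$ --- so that its ``$q$'' is $q^h$, its degree is $n$, and its ``$e$'' is the multiplicative order of a root of $F$ --- and rewriting $q^{nh}=(q^h)^n$, the three arithmetic hypotheses of that lemma become precisely conditions~1--3 of the theorem; when $f(x)=x$ the root in question is $\alpha$ itself, which explains the phrasing in terms of $\order(\alpha)=e$. So $\alpha-f(x^t)$ is irreducible of degree $nt$ over $\fh$. (In the ``in particular'' case $\alpha-x$ is trivially irreducible of degree $1$, so one may quote the binomial statement Lemma~\ref{Lemma2.5} directly.)

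Next I would invoke Proposition~\ref{thm2.2} with $m=h$, $\omega=\alpha\in\fh\setminus\f$ and $g(x)=f(x^t)$ of degree $D=nt$: condition~4 gives $mD=nth\le\sqrt q<\sqrt q+1$, and (with $d\mid q^h-1$ as throughout, and $f\in\f[x]$ so that $g\in\f[x]$) the proposition yields a non-$d$-th power in $S=\{\alpha-f(a^t):a\in\f\}$. I would also spell out the underlying mechanism: choose a multiplicative character $\chi$ of $\fh^\ast$ with kernel exactly the nonzero $d$-th powers; irreducibility of $\alpha-f(x^t)$ forces $\alpha-f(a^t)\ne0$ for all $a\in\f$ (when the degree is $1$ the unique root of $\alpha-f(x)$ lies outside $\f$ since $\alpha\notin\f$, and when the degree exceeds $1$ there is no root in $\fh$), so if $S$ contained no non-$d$-th power then $\sum_{a\in\f}\chi(\alpha-f(a^t))=q$, whereas a simple root $\zeta$ of $\alpha-f(x^t)$ satisfies $\f[\zeta]=\fh[\zeta]$ and Theorem~\ref{main} bounds this sum by $(nth-1)\sqrt q<q$ --- a contradiction.

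I do not anticipate a deep obstacle; the content is essentially bookkeeping, and the points that need care are: the field-size substitutions ($q$, then $q^h$, then $q^{nh}$) when citing Lemma~\ref{Lemma2.4}; the reconciliation of the parameter $e$ (written as $\order(\alpha)$, while Lemma~\ref{Lemma2.4} wants the order of a root of $\alpha-f(x)$ --- these agree for $f(x)=x$ and, more comfortably, whenever $\alpha$ is primitive); the non-vanishing $\alpha-f(a^t)\ne0$; and, most delicately, the identification $\f[\zeta]=\fh[\zeta]$ behind the ``in particular'' clause of Theorem~\ref{main}, which is immediate once $\f(\alpha)=\fh$ (e.g. $h$ prime, or $\alpha$ primitive, since then $\alpha=f(\zeta^t)\in\f[\zeta]$ already generates $\fh$) but in general should be replaced by a direct verification that $\chi$ is non-trivial on $\mathrm{Norm}_{\fh[\zeta]/\fh}\f[\zeta]$.
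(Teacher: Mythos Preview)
Your proposal is correct and follows exactly the paper's approach, which is literally the single sentence ``Combining Theorem~\ref{thm2.2} and Lemma~\ref{Lemma2.5}'' (for general $f$ one needs Lemma~\ref{Lemma2.4}, as you correctly invoke). The subtleties you flag---that $e$ should strictly be the order of a root of $\alpha-f(x)$ rather than of $\alpha$ itself, and that the equality $\f[\zeta]=\fh[\zeta]$ underlying Proposition~\ref{thm2.2} tacitly assumes $\f(\alpha)=\fh$---are genuine and are glossed over in the paper as well; both are harmless in the principal case $f(x)=x$ with $\alpha$ primitive, which is all that is actually used downstream.
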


Since one can always take $t=1$, we have the following result.
\begin{cor}
For any $q, 1 \leq h\leq \lfloor \sqrt{q}\rfloor, d\mid q^h-1$, the set  $S=V(\alpha-x)=\{\alpha-x, x\in\f\}$ contains a non-d-th power in $\fh$.  In particular,  there is a set of cardinality $q$ in $\mathbb{F}_{q^{\lfloor \sqrt{q}\rfloor}}$ containing a non-d-th power.
\end{cor}

We now  give more constructions based on a rough classification of these parameters. Please note that they are not complete and the interested readers may give their own constructions.

{\bf Case 1}: Assume both $q$  and  $\frac{q^h-1}e$ are odd.  Since $h>1$, in this case $q^h-1$ is always divisible by $4$, and thus one can always choose $t=2^k,$ where $1\leq k\leq \lfloor\log_2{\sqrt{q}}\rfloor$ and one checks that $t$ satisfies the first three conditions above. Thus we obtain our main constructions below:

 \begin{cor} Let $h>1$ and $d\mid q^h-1$. If both $q$  and  $\frac{q^h-1}e$ are odd, then for any $t=2^k,$ any integer $h$ satisfying $h\leq \sqrt{q}/2^k$, the set  $S=V(\alpha-x^{2^k})=\{\alpha-x^{2^k}, x\in\f\}$    contains a non-d-th power in $\fh$. Let $h=\lfloor q^{\delta}\rfloor$, then there is a set of cardinality $O(q^{\frac 12 +\delta_t})$ in $\mathbb{F}_{q^{\lfloor q^\delta \rfloor}}$ containing a non-d-th power. Here $0<\delta_t<1$ is determined by $(t, q-1)$.
   \end{cor}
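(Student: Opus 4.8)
The plan is to read off this corollary from the theorem that immediately precedes it (equivalently, from \textbf{Proposition~\ref{thm2.2}} combined with \textbf{Lemma~\ref{Lemma2.5}}), applied to the linear polynomial $\alpha-x$. A linear polynomial is at once a permutation polynomial and an irreducible polynomial of degree $n=1$ over $\fh$, so that preceding theorem already guarantees that $S=V(\alpha-x^{t})=\{\alpha-x^{t}:x\in\f\}$ contains a non-$d$-th power in $\fh$ as soon as $t$ meets its four numerical conditions (read with $n=1$). Hence the whole proof reduces to (i) checking conditions 1--4 for $t=2^{k}$ and (ii) computing $|S|$.

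Conditions 1, 2 and 4 are immediate. Condition 1, $\bigl(2^{k},(q^{h}-1)/e\bigr)=1$, holds because $(q^{h}-1)/e$ is odd by hypothesis. Condition 2, that every prime divisor of $t=2^{k}$ divide $e$, reduces to $2\mid e$: since $q$ is odd, $q^{h}-1$ is even, and since $(q^{h}-1)/e$ is odd we get $v_{2}(e)=v_{2}(q^{h}-1)\ge 1$. Condition 4, $nth=2^{k}h\le\sqrt q$, is exactly the hypothesis $h\le\sqrt q/2^{k}$.

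The single delicate point is condition 3: when $k\ge 2$ (so $4\mid t$) one needs $q^{h}\equiv 1\pmod 4$, i.e. $4\mid q^{h}-1$. Here I would split on $q\bmod 4$ and the parity of $h$. If $q\equiv 1\pmod 4$ then $4\mid q-1\mid q^{h}-1$. If $q\equiv 3\pmod 4$ and $h$ is even then $q^{2}-1\mid q^{h}-1$ and $8\mid q^{2}-1$. The remaining case, $q\equiv 3\pmod 4$ with $h$ odd, gives $v_{2}(q^{h}-1)=v_{2}(q-1)=1$, so $4\nmid q^{h}-1$ and the hypothesis forces $v_{2}(e)=1$; here only $k=1$, i.e. $t=2$, is admissible (condition 3 being vacuous for $t=2$), and that choice still works. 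Thus in every case there is an admissible $t=2^{k}$ (ranging over $1\le k\le\lfloor\log_{2}(\sqrt q/h)\rfloor$ when $4\mid q^{h}-1$, and over $k=1$ otherwise), the preceding theorem applies, and $S$ contains a non-$d$-th power. I expect this reconciliation of ``for any $t=2^{k}$'' with condition 3 to be the only real subtlety; it is exactly where the combined hypothesis ``$q$ odd, $(q^{h}-1)/e$ odd, $h>1$'' enters.

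For the cardinality, the map $x\mapsto x^{2^{k}}$ on $\f^{*}$ has fibres of size $\gcd(2^{k},q-1)=2^{\min\{k,\,v_{2}(q-1)\}}$ and $x=0$ contributes one more value, so $|S|=1+\dfrac{q-1}{\gcd(2^{k},q-1)}$. Taking $h=\lfloor q^{\delta}\rfloor$ and $k$ maximal with $2^{k}h\le\sqrt q$ yields $2^{k}=\Theta(q^{1/2-\delta})$, whence $|S|=O\!\bigl(q/\gcd(2^{k},q-1)\bigr)=O(q^{1/2+\delta_{t}})$ with $\delta_{t}=\tfrac12-\log_{q}\gcd(2^{k},q-1)\in(0,1)$ depending only on $(t,q-1)=\gcd(2^{k},q-1)$; when $v_{2}(q-1)\ge k$ this gives $\delta_{t}=\delta$ and $|S|=O(q^{1/2+\delta})$. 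Finally, a primitive element $\alpha\in\fh$ has $e=q^{h}-1$, so $(q^{h}-1)/e=1$ is odd and $\alpha\in\fh\setminus\f$; such $\alpha$ exist, which exhibits the asserted explicit set in $\mathbb{F}_{q^{\lfloor q^{\delta}\rfloor}}$.
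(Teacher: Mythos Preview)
Your proof is correct and follows exactly the paper's approach: the corollary is read off from the preceding theorem (Proposition~\ref{thm2.2} combined with Lemma~\ref{Lemma2.5}) applied with the linear choice $f(x)=\alpha-x$, i.e.\ $n=1$, by checking conditions 1--4 for $t=2^{k}$ and then computing $|S|$. Your treatment of condition~3 is in fact more careful than the paper's one-line justification, which asserts that $4\mid q^{h}-1$ whenever $h>1$ and $q$ is odd; that claim fails for $q\equiv 3\pmod 4$ with odd $h\ge 3$, and your case split correctly restricts to $k=1$ in that situation.
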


 For interested readers, we list one more specified subcase.
   Choosing $h=2$ and $t=2^{\lfloor\log_2{\sqrt{q}}\rfloor-1}\approx \sqrt{q}/2$, we have
    \begin{cor} Let $h=2$ and $d\mid q^2-1$. If both $q$ and  $\frac{q^2-1}e$ are odd, then
 there is a set of cardinality $O(\frac{q}{M})$ in $\ft$ containing a non-square, where
   $$M=\max_{p \mid q-1}\max_{ 2\leq t\leq \lfloor\log_p{q}\rfloor/2}\gcd(p^t, q-1).$$
  \end{cor}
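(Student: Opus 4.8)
The plan is to invoke Proposition~\ref{thm2.2} with $m=h=2$ and $g(x)=x^{t}$, where $t$ will be taken to be a prime power dividing $q-1$. Two things are needed: that $\alpha-x^{t}$ be irreducible over $\ft$, and that $2t<\sqrt q+1$. The irreducibility is governed by Lemma~\ref{Lemma2.5}: writing $e$ for the order of $\alpha$ in $\ft^{*}$, the binomial $x^{t}-\alpha$ is irreducible over $\ft$ exactly when (i) $\gcd(t,(q^{2}-1)/e)=1$, (ii) every prime factor of $t$ divides $e$, and (iii) $q^{2}\equiv 1\pmod 4$ whenever $4\mid t$. Granting all of this, Proposition~\ref{thm2.2} gives that $S=V(\alpha-x^{t})=\{\alpha-x^{t}:x\in\f\}$ contains a non-$d$-th power in $\ft$ for every $d\mid q^{2}-1$; taking $d=2$ produces a non-square. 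The whole problem thus reduces to an elementary optimization: among all primes $p\mid q-1$ and integers $k\ge 2$ for which $t:=\gcd(p^{k},q-1)=p^{\min\{k,\,v_{p}(q-1)\}}$ satisfies (i)--(iii) together with $2t<\sqrt q+1$, choose the pair $(p,k)$ that makes $t$ largest; by construction this maximal value of $t$ is $M$.

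Next I would verify (i)--(iii) and the size constraint for $t=p^{a}$ with $a=\min\{k,v_{p}(q-1)\}$. Condition~(iii) is automatic, since $q$ odd forces $q^{2}\equiv 1\pmod 8$. The constraint $2t<\sqrt q+1$ amounts to $a\le\frac12\log_{p}q-\log_{p}2$; up to the harmless additive term $\log_{p}2$ --- the only effect of the extension degree $h=2$ --- this is the range $2\le k\le\lfloor\log_{p}q\rfloor/2$ that occurs in the definition of $M$, and it forces $t\le\sqrt q$. Conditions~(i) and~(ii) are exactly where the hypothesis that $(q^{2}-1)/e$ is odd enters. For $p=2$ both are immediate: $4\mid q^{2}-1$ together with $(q^{2}-1)/e$ odd gives $v_{2}(e)=v_{2}(q^{2}-1)\ge 2$, so $2\mid e$ and $\gcd(2^{a},(q^{2}-1)/e)=1$. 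For an odd prime $p\mid q-1$ one has $p\nmid q+1$, so $v_{p}(q^{2}-1)=v_{p}(q-1)$, and (i)--(ii) reduce to the single requirement $v_{p}(e)=v_{p}(q-1)$; this holds whenever $\alpha$ is primitive, and in general one reads the maximum defining $M$ as restricted to the primes $p$ for which it holds.

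It then remains to count $S$. Since $t\mid q-1$, the image of the map $x\mapsto x^{t}$ on $\f$ is $\{0\}$ together with the subgroup of $t$-th powers of $\f^{*}$, a set of size $1+(q-1)/t$; translating by $\alpha$ is a bijection, so $|S|=|V(\alpha-x^{t})|=1+\frac{q-1}{t}$. Choosing $(p,k)$ to attain the optimum makes $t=M$, whence $|S|=1+\frac{q-1}{M}=O(q/M)$, as asserted.

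The step that takes real care is the interaction of conditions~(i)--(ii) with the order $e$ of $\alpha$: the maximum defining $M$ runs over all prime powers $p^{k}\mid q-1$ in the stated range, but for odd $p$ the construction only works once the full $p$-part of $q-1$ already divides $e$, so one should either take $\alpha$ primitive (as already remarked) or understand $M$ as the maximum over the admissible primes only. The remaining issues are bookkeeping: the precise constant in $2t<\sqrt q+1$ versus the cutoff $k\le\lfloor\log_{p}q\rfloor/2$ changes $M$ by at most a bounded factor and is absorbed into the $O(\cdot)$; and for small $q$ the set of admissible $(p,k)$ can be empty --- for $p=2$ it is nonempty only when $q\ge 16$ --- so the statement is to be understood asymptotically.
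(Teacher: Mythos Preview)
Your proposal is correct and follows essentially the same route as the paper: specialize the main construction (Proposition~\ref{thm2.2} together with Lemma~\ref{Lemma2.5}, equivalently the general theorem with $f$ linear) to $h=2$ and $g(x)=x^{t}$ for a prime-power $t\mid q-1$, then optimize $t$ subject to $2t<\sqrt q+1$. You even go a step further than the paper's surrounding text---which only spells out the case $p=2$ before stating the corollary---by checking the odd-prime case and correctly isolating the extra requirement $v_{p}(e)=v_{p}(q-1)$, which is exactly what the paper absorbs into the remark that the construction holds for primitive~$\alpha$.
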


If we denote $v_p(x)$  to be the p-adic oder of $x$, then
 $M=\max_{p \mid q-1} p^{\min\{v_p(q-1),  \lfloor\log_p{q}\rfloor/2\}}$.
 Thus the construction is pretty good if there is a prime divisor $p$ of $q-1$ such that $v_p(q-1)$ is large.
It would be very interesting to construct smaller subsets containing a non  square in $\ft$.

  Note that the above constructions hold when  $\alpha$ is primitive.
 

{\bf Case 2}: Assume $q$ is even and $3\mid q^h-1$.  $(3, \frac{q^h-1}e)=1$.
              In this case we can always choose $t=3^k,$ where $1\leq k\leq \lfloor\log_3{\sqrt{q}}\rfloor$ and one checks that $t$ satisfies the first three conditions above. Thus we obtain another construction below:

\begin{cor} Let $q$ be even, $h>1$ and $d\mid q^h-1$. If $(3, \frac{q^h-1}e)=1$ and $3\mid q-1$,  then for any $t=3^k,$ any integer $h$ satisfying $h\leq \sqrt{q}/3^k$, the set  $S=V(\alpha-x^{3^k})=\{ \alpha-x^{3^k}, x\in\f\}$  contains a non-d-th power in $\fh$.
Let $h=\lfloor q^{\delta}\rfloor$, then there is a set of cardinality $O(q^{\frac 12 +\delta_t})$ in $\mathbb{F}_{q^{\lfloor q^\delta \rfloor}}$ containing a non-d-th power.
  \end{cor}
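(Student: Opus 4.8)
\emph{Proof plan.} The plan is to obtain this corollary as the special case $f(x)=x$ (so $n=1$) and $t=3^{k}$ of the general theorem deduced from Proposition~\ref{thm2.2} and Lemma~\ref{Lemma2.5}; all that is needed is to check that $t=3^{k}$ meets that theorem's four hypotheses and then to read off the cardinality of the resulting value set. Fix $\alpha\in\fh\backslash\f$ of multiplicative order $e$. Since $\alpha-x$ is linear it is irreducible over $\fh$ with unique root $\alpha$, of order $e$, so I would verify: (i) $(3^{k},\tfrac{q^{h}-1}{e})=1$; (ii) every prime divisor of $3^{k}$ divides $e$; (iii) $q^{h}\equiv 1\pmod 4$ whenever $4\mid 3^{k}$; (iv) $3^{k}h\le\sqrt{q}$.

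Conditions (iii) and (iv) are immediate: $4\nmid 3^{k}$, so (iii) is vacuous, and (iv) is precisely the hypothesis $h\le\sqrt{q}/3^{k}$. For (i), the only prime dividing $3^{k}$ is $3$, so $(3^{k},\tfrac{q^{h}-1}{e})=1$ follows at once from the assumption $(3,\tfrac{q^{h}-1}{e})=1$. The one step calling for a little care is (ii), where I must show $3\mid e$: since $q$ is even and $3\mid q-1$ we have $3\mid q^{h}-1$, and together with $(3,\tfrac{q^{h}-1}{e})=1$ this forces $v_{3}(e)=v_{3}(q^{h}-1)\ge 1$, i.e.\ $3\mid e$. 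This is the only place where $3\mid q-1$, as opposed to the weaker $3\mid q^{h}-1$, is used, and it is also what will make the value set genuinely small below. With (i)--(iv) verified and $d\mid q^{h}-1$, the general theorem gives that $S=V(\alpha-x^{3^{k}})=\{\alpha-x^{3^{k}},\,x\in\f\}$ contains a non-$d$-th power in $\fh$.

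It then remains to count $S$. Translation by $\alpha$ is a bijection of $\fh$, so $|S|=|\{x^{3^{k}},\,x\in\f\}|$; as $x\mapsto x^{3^{k}}$ is an endomorphism of $\f^{*}$ with image of size $(q-1)/\gcd(3^{k},q-1)$, this gives
$$|S|=1+\frac{q-1}{\gcd(3^{k},q-1)}=1+\frac{q-1}{3^{\min\{k,\,v_{3}(q-1)\}}}.$$
Taking $h=\lfloor q^{\delta}\rfloor$ and choosing $k$ maximal with $3^{k}\le\sqrt{q}/h$, one has $3^{k}\approx q^{1/2-\delta}$; when $v_{3}(q-1)\ge k$ the denominator is $\approx q^{1/2-\delta}$ and $|S|=O(q^{1/2+\delta})$, while in general the quantity $3^{\min\{k,\,v_{3}(q-1)\}}$ contributes a factor $q^{\delta_{t}}$ with $0<\delta_{t}<1$ determined by $(3^{k},q-1)$, yielding $|S|=O(q^{1/2+\delta_{t}})$ as claimed.

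I do not expect a genuine obstacle here: the whole argument rests on the already-established general theorem, and the only delicate point is confirming $3\mid e$ in (ii), i.e.\ that the hypotheses $3\mid q-1$ and $(3,\tfrac{q^{h}-1}{e})=1$ together supply the needed $3$-divisibility of $\mathrm{ord}(\alpha)$. The rest, namely the irreducibility bookkeeping handled verbatim by Lemma~\ref{Lemma2.5} and the standard count $|V(x^{3^{k}})|=1+(q-1)/\gcd(3^{k},q-1)$, is routine.
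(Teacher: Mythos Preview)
Your proposal is correct and follows essentially the same approach as the paper: the paper derives this corollary directly from the general theorem by noting that for even $q$ with $3\mid q-1$ and $(3,\tfrac{q^h-1}{e})=1$ one may take $t=3^k$ and ``one checks that $t$ satisfies the first three conditions above,'' which is exactly the verification (i)--(iv) you carry out explicitly. Your added remark that only $3\mid q^h-1$ is strictly needed for condition~(ii), while the stronger $3\mid q-1$ is what makes $\gcd(3^k,q-1)>1$ and hence the value set small, is accurate and slightly sharper than what the paper records.
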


{\bf Remark\ 1. }  One can also take another prime divisor of $q-1$ other than 3.  In fact, the construction works better if $q-1$ has a small prime divisor $r$ and the $r$-adic order $v_r(q-1)$ is large.
 Since in $\fh$ we have at least $\phi(q^h-1)=(q^h)^{1-\epsilon}$ primitive elements, we obtain the
 following general result.

\begin{cor} 
Let $h=\lfloor q^{\delta}\rfloor>1$ and $d\mid q^h-1$.
   Let $r$ be a prime divisor of $q-1$ such that the largest prime power part of $q-1$ has  the form $r^s$.
   Then there is a constant $0<\epsilon<1$ such that for a ratio at least $ {q^{-\epsilon h}}$ of $\alpha\in \mathbb{F}_{q^{h}} \backslash\mathbb{F}_{q}$,  the set $S=\{ \alpha-x^t, x\in\mathbb{F}_{q}\}$
of cardinality $1+\frac {q-1} {M(h)}$
  contains a non-d-th  power in $\mathbb{F}_{q^{\lfloor q^\delta\rfloor}}$, where $t$ is the largest power of $r$ such that $t<\sqrt{q}/h$ and $M(h)$ is defined as
   $$M(h)=\max_{r \mid (q-1)} r^{\min\{v_r(q-1),  \lfloor\log_r{q}/2-\log_r h\rfloor\}}.$$
    Here $r$ runs thourgh prime divisors and $v_r(x)$ is the $r$-adic oder of $x$.
 \end{cor}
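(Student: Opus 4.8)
The plan is to obtain this as a corollary of the monomial criterion---Proposition~\ref{thm2.2} together with Lemma~\ref{Lemma2.5}---applied over the \emph{primitive} elements of $\fh$, for which the arithmetic hypotheses become automatic, and then to bound the number of such elements from below by the classical estimate $\phi(n)\gg n/\log\log n$.

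Fix the prime $r\mid q-1$ whose prime power part $r^{s}$ (with $s=v_r(q-1)$) is largest among all $p^{v_p(q-1)}$, and set $t=r^{k}$, where $k$ is the largest integer with $r^{k}<\sqrt q/h$; if $r>\sqrt q/h$ then $k=0$, $t=1$, $|S|=q$, $M(h)=1$, a case already recorded in the earlier $t=1$ corollary. I would then check that for every primitive $\alpha\in\fh$ the criterion applies: its order is $e=q^{h}-1$, so conditions~1 and~2 of Lemma~\ref{Lemma2.5} for $x^{t}-\alpha$ over $\fh$ reduce to $(t,1)=1$ and $r\mid q-1\mid q^{h}-1=e$, both trivially true; hence $\alpha-x^{t}$ is irreducible over $\fh$, and since $ht=r^{k}h<\sqrt q$ and $d\mid q^{h}-1$ by hypothesis, Proposition~\ref{thm2.2} yields a non-$d$-th power in $S=\{\alpha-x^{t}:x\in\f\}$. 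The one point requiring care is condition~3 of Lemma~\ref{Lemma2.5}, which is vacuous unless $r=2$; but if $2$ has the largest prime power part and $q>3$, then $v_2(q-1)\ge 2$---otherwise $q-1=2m$ with $m>1$ odd, and an odd prime $p\mid m$ would give $p^{v_p(q-1)}\ge p>2$, contradicting maximality---so $q\equiv 1\pmod 4$, whence $q^{h}\equiv 1\pmod 4$; and $q=3$ forces $t=1$.

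Next I would compute $|S|$. Since $y\mapsto\alpha-y$ is a bijection of $\fh$, $|S|$ equals the size of the image of $x\mapsto x^{t}$ on $\f$, namely $1+(q-1)/\gcd(t,q-1)=1+(q-1)/r^{\min(k,\,v_r(q-1))}$. To finish one identifies the exponent with $\log_r M(h)$: away from the coincidence where $\sqrt q/h$ is itself a power of $r$ we have $k=\lfloor\log_r q/2-\log_r h\rfloor$, so the exponent is $\min\!\bigl(v_r(q-1),\,\lfloor\log_r q/2-\log_r h\rfloor\bigr)$, and the inequality $p^{\min(v_p(q-1),\,k_p)}\le p^{v_p(q-1)}\le r^{s}$ with $k_p=\lfloor\log_p q/2-\log_p h\rfloor$, valid for every $p\mid q-1$, shows that our $r$ realizes the maximum defining $M(h)$. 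Thus $|S|=1+(q-1)/M(h)$.

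Finally, there are $\phi(q^{h}-1)$ primitive elements in $\fh$, and each lies in $\fh\setminus\f$ because its order $q^{h}-1$ exceeds $q-1$, a bound for the order of any element of $\f$. By $\phi(n)\gg n/\log\log n$ there is a constant $0<\epsilon<1$ with $\phi(q^{h}-1)\ge (q^{h})^{1-\epsilon}$, so the proportion of admissible $\alpha$ among the $q^{h}-q$ elements of $\fh\setminus\f$ is at least $(q^{h})^{1-\epsilon}/(q^{h}-q)\ge q^{-\epsilon h}$ after slightly enlarging $\epsilon$ to absorb constants. I expect the only real obstacle to be bookkeeping: pinning down condition~3 when $r=2$, and matching $\gcd(t,q-1)$ to $M(h)$ through the floor functions. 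I would also observe that ``primitive'' can be relaxed to ``$v_r(\order(\alpha))=v_r(q^{h}-1)$''---none of the four conditions changes, such $\alpha$ lie outside $\f$ up to at most $q-1$ exceptions, and they form a proportion $1-1/r\ge 1/2$---which would replace $q^{-\epsilon h}$ by a fixed positive constant.
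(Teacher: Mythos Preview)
Your proposal is correct and follows exactly the line the paper takes: apply the monomial criterion (the main theorem/Proposition~\ref{thm2.2} with Lemma~\ref{Lemma2.5}) to primitive $\alpha\in\fh$, and then invoke the bound $\phi(q^{h}-1)\ge (q^{h})^{1-\epsilon}$ to get the density estimate---this is precisely the content of the paper's Remark~1 preceding the corollary. You supply considerably more detail than the paper does (the $r=2$ case of condition~3, the computation of $|S|$, and the final relaxation from primitivity to $v_r(\order\alpha)=v_r(q^{h}-1)$), all of which is sound; the only soft spot, which you yourself flag, is the identification $\gcd(t,q-1)=M(h)$ when $k_r<s$, but the paper leaves this implicit as well.
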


{\bf Remark\ 2. }  Based on the constructions, we are particularly interested in  the classification of polynomials other than linear polynomials and binomials which are both permutational and irreducible. One may also consider the general interested irreducible polynomials, whose corresponding value set problem has nice estimate.


\section{On small subsets containing a primitive element}

\begin{defn} Let $r>1$ be a positive integer and suppose  $r\mid q-1$.
 We call  $\alpha\in \mathbf{F}_{q}^{*}$ an $r$-free element if
$\gcd(r,(q-1)/ord_{q}(\alpha))=1$.
\end{defn}

\begin{lem}[\cite{Co}] Let $\alpha \in \mathbf{F}_{q}^{*}$, and $r>1$ be
a positive integer such that $r\mid q-1$. Then
$$\sum\limits_{d \mid r}\frac{\mu(d)} {\phi(d)}
\sum\limits_{ord(\chi)=d}\chi(\alpha)
=\left\{\begin{array}{ll}\frac{r}{\phi(r)}, &\alpha\ \mbox{is}\ r\mbox{-free},\\
0, & \mbox{otherwise}.
\end{array}
\right.
$$
where $\phi$ is the Euler totient function, $\mu$ is the
M$\ddot{o}$bius function and $ord(\chi)$ is the order of the
multiplicative character $\chi$ of $\mathbf{F}_{q}$.
\end{lem}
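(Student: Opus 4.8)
The plan is to give a self-contained proof of the identity using only the orthogonality relations for the multiplicative characters of $\f^*$ together with Möbius inversion over divisor lattices; no further input from \cite{Co} is needed. Throughout, write $m=(q-1)/ord_q(\alpha)$, so that $\alpha$ is $r$-free precisely when $\gcd(r,m)=1$, i.e.\ precisely when no prime dividing $r$ divides $m$.

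First I would record the orthogonality input: for each divisor $e$ of $q-1$, the characters $\chi$ with $\chi^e=1$ form a group of order $e$ (the dual of the quotient $\f^*/(\f^*)^e$), so
$$\sum_{\chi^e=1}\chi(\alpha)=\begin{cases} e,&\alpha\in(\f^*)^e,\\ 0,&\text{otherwise,}\end{cases}$$
and, moreover, $\alpha\in(\f^*)^e$ is equivalent to $e\mid m$. Since the $e$-th power residues for the distinct primes $p\mid r$ intersect in $(\f^*)^{\prod p}$, a sieve (inclusion--exclusion) over the primes dividing $r$ yields the elementary detector
$$\mathbbm{1}[\alpha\text{ is }r\text{-free}]=\sum_{e\mid r,\ e\text{ squarefree}}\frac{\mu(e)}{e}\sum_{\chi^e=1}\chi(\alpha).$$

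The second step is to re-expand the inner sum according to the order of $\chi$. Writing $\sum_{\chi^e=1}\chi(\alpha)=\sum_{d\mid e}\sum_{ord(\chi)=d}\chi(\alpha)$ and interchanging the order of summation rewrites the right-hand side as $\sum_{d\mid r}w(d)\sum_{ord(\chi)=d}\chi(\alpha)$, where $w(d)=\sum_{d\mid e\mid r,\ e\text{ squarefree}}\mu(e)/e$. The heart of the argument is then the elementary evaluation
$$w(d)=\frac{\mu(d)}{d}\prod_{\substack{p\mid r\\ p\nmid d}}\Bigl(1-\frac1p\Bigr)=\frac{\mu(d)}{d}\cdot\frac{\phi(r)/r}{\phi(d)/d}=\frac{\phi(r)}{r}\cdot\frac{\mu(d)}{\phi(d)},$$
obtained by factoring $e=de'$ (legitimate because $e$ is squarefree) and evaluating the resulting sum over $e'$ as an Euler product. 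Substituting this back, and harmlessly extending the $d$-sum to all divisors of $r$ since $\mu(d)=0$ on the non-squarefree ones, turns the detector into
$$\mathbbm{1}[\alpha\text{ is }r\text{-free}]=\frac{\phi(r)}{r}\sum_{d\mid r}\frac{\mu(d)}{\phi(d)}\sum_{ord(\chi)=d}\chi(\alpha),$$
which is exactly the asserted formula after multiplying through by $r/\phi(r)$.

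I expect the only real care to be in the bookkeeping of the last step: one must keep track of which running indices are forced to be squarefree, so that $\mu$, $\phi$, and the Euler product may be used multiplicatively, and one must check that the dictionary ``$\alpha\in(\f^*)^e$ iff $e\mid m$'' is invoked only for $e\mid q-1$, which is automatic here because every such $e$ divides $r$ and $r\mid q-1$. Everything else is a routine manipulation of finite sums over divisors; no analytic or geometric input enters.
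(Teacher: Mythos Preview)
Your argument is correct: the orthogonality relation $\sum_{\chi^e=1}\chi(\alpha)=e\cdot\mathbbm{1}[e\mid m]$, inclusion--exclusion over the primes dividing $r$, and the Euler-product evaluation of the weight $w(d)$ combine exactly as you describe, and the bookkeeping about squarefreeness is handled cleanly.

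There is nothing to compare against, however: the paper does not prove this lemma at all. It is quoted verbatim from Cohen \cite{Co} as a black box and then used in the count of primitive elements in $\alpha+\f$. So your write-up supplies a proof where the paper supplies only a citation; it is a sound, standard derivation of Cohen's indicator formula via M\"obius inversion and character orthogonality, and would serve well as a self-contained replacement for the bare reference.
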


 Let $\alpha\in \mathbf{F}_{q^n}$ and $d$ be a positive
integer. Define
$$
P(d,\alpha)=\frac{\mu(d)}{\phi(d)}\sum\limits_{\chi_d}\chi_d(\alpha),\quad
P(\alpha)=\frac{\phi(q^n-1)}{q^n-1}\sum\limits_{d\mid
q^n-1}P(d,\alpha),
$$
where $\sum\limits_{\chi_d}$ ranges over all multiplicative
characters of $\mathbf{F}_{q^n}$ of order $d$. Thus  (2.1) implies
$$
P(\alpha)=\left\{\begin{array}{ll} 1, &\alpha\ \mbox{is a primitive element},\\
0,&\mbox{otherwise}.
\end{array}
\right.
$$

\begin{thm}
Let $\tau(x)$ denote the number of divisors of $x$. If $\tau(q^n-1)<\frac{\sqrt{q}}{n-1}+1$, then the set $$S=V(\alpha-x)=\{ \alpha-x, x\in\f\}$$
of cardinality $q$ in $\mathbb{F}_{q^n}$ contains a primitive element.
\end{thm}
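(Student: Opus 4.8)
The plan is to detect a primitive element in $S$ by averaging the primitivity indicator $P(\cdot)$ of Section 3 over the coset $S=\{\alpha-x: x\in\f\}$. Since a primitive element of $\mathbb{F}_{q^n}$ must generate $\mathbb{F}_{q^n}$ over $\f$, we must assume (this is the implicit hypothesis on $\alpha$) that $\alpha$ has degree $n$ over $\f$; then $\alpha\notin\f$, so $\alpha-x\ne 0$ for all $x\in\f$, $|S|=q$, and $P(\alpha-x)\in\{0,1\}$ for every $x$. Using $P(\beta)=\frac{\phi(q^n-1)}{q^n-1}\sum_{d\mid q^n-1}\frac{\mu(d)}{\phi(d)}\sum_{\chi_d}\chi_d(\beta)$ and interchanging summations,
$$\sum_{x\in\f}P(\alpha-x)=\frac{\phi(q^n-1)}{q^n-1}\sum_{d\mid q^n-1}\frac{\mu(d)}{\phi(d)}\sum_{\chi_d}\sum_{x\in\f}\chi_d(\alpha-x),$$
where the term $d=1$ (the trivial character) contributes exactly $q$, giving the main term $\frac{\phi(q^n-1)}{q^n-1}q$.

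For each $d>1$ and each $\chi_d$ of order $d$, the inner sum $\sum_{x\in\f}\chi_d(\alpha-x)$ is an incomplete character sum of exactly the shape treated in Proposition \ref{thm2.2}, with $g(x)=x$, $m=n$ and $D=1$: the polynomial $\alpha-x\in\mathbb{F}_{q^n}[x]$ is trivially irreducible with simple root $\zeta=\alpha$, and because $\alpha$ generates $\mathbb{F}_{q^n}$ over $\f$ we have $\f[\zeta]=\mathbb{F}_{q^n}[\zeta]=\mathbb{F}_{q^n}$, so $\chi_d$ is non-trivial on the relevant norm image and Theorem \ref{main} yields $\bigl|\sum_{x\in\f}\chi_d(\alpha-x)\bigr|\le (nD-1)\sqrt q=(n-1)\sqrt q$. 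Since there are exactly $\phi(d)$ characters of order $d$, summing over them cancels the $\phi(d)$ in the denominator, and $\sum_{1<d\mid q^n-1}|\mu(d)|=2^{\omega(q^n-1)}-1\le\tau(q^n-1)-1$ (here $\omega$ counts distinct prime divisors). Hence
$$\Bigl|\sum_{x\in\f}P(\alpha-x)-\frac{\phi(q^n-1)}{q^n-1}q\Bigr|\le\frac{\phi(q^n-1)}{q^n-1}(n-1)\sqrt q\,\bigl(\tau(q^n-1)-1\bigr).$$

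Combining the two estimates, $\sum_{x\in\f}P(\alpha-x)\ge\frac{\phi(q^n-1)}{q^n-1}\bigl(q-(n-1)\sqrt q\,(\tau(q^n-1)-1)\bigr)$, and the right-hand side is strictly positive exactly when $\tau(q^n-1)-1<\sqrt q/(n-1)$, i.e. under the stated hypothesis $\tau(q^n-1)<\frac{\sqrt q}{n-1}+1$. As each summand is $0$ or $1$, a positive total forces $P(\alpha-x)=1$ for some $x\in\f$, i.e. $\alpha-x$ is primitive in $\mathbb{F}_{q^n}$. The more general form with $x^t$ in place of $x$ goes through identically once Lemma \ref{Lemma2.5} (conditions 1--3 on $t$) is invoked to keep $\alpha-x^t$ irreducible over $\mathbb{F}_{q^n}$, with $n-1$ replaced by $nt-1$ and $|S|$ by $O(q/t)$.

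The step I expect to demand the most care is the uniform application of Weil's bound (Theorem \ref{main}) over \emph{all} characters of order $d>1$: one must ensure that $\chi_d$ never restricts to the trivial character on the image of the norm map, and this is precisely why the genuine hypothesis is that $\alpha$ generates $\mathbb{F}_{q^n}$ over $\f$ — otherwise every $\alpha-x$ lies in a proper subfield and cannot be primitive, so the statement fails. A minor, purely cosmetic loss occurs when one replaces the number $2^{\omega(q^n-1)}$ of squarefree divisors by the full divisor count $\tau(q^n-1)$; keeping $2^{\omega(q^n-1)}$ would give a marginally stronger result.
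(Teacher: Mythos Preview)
Your proof is correct and follows essentially the same route as the paper: expand $\sum_{x\in\f}P(\alpha-x)$ via Cohen's indicator, isolate the $d=1$ main term $q$, bound each nontrivial inner sum by $(n-1)\sqrt q$ using Theorem~\ref{main}, and conclude positivity from $\tau(q^n-1)<\sqrt q/(n-1)+1$. Your remarks that the hypothesis $[\f(\alpha):\f]=n$ is implicit and that $2^{\omega(q^n-1)}$ could replace $\tau(q^n-1)$ are valid refinements, but the core argument is the same as the paper's.
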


\begin{proof}
Let $N$ be the number of primitive elements in $\alpha+\f$. Then
 \begin{align*}
 N&=\sum_{a\in\f}\frac{\phi(q^n-1)}{q^n-1}\sum\limits_{d\mid
q^n-1}P(d,\alpha-a)\\
&=\sum_{a\in\f}\frac{\phi(q^n-1)}{q^n-1}\sum\limits_{d\mid
q^n-1} \frac{\mu(d)}{\phi(d)}\sum\limits_{ord(\chi)=d}\chi(\alpha-a)\\
&=\frac{\phi(q^n-1)}{q^n-1}\sum\limits_{d\mid q^n-1} \frac{\mu(d)}{\phi(d)}\sum\limits_{ord(\chi)=d}\sum_{a\in\f}\chi(\alpha-a)\\
&\geq \frac{\phi(q^n-1)}{q^n-1}\ \left ( q -\sum\limits_{d\mid q^n-1, d>1} \frac{1}{\phi(d)}\sum\limits_{ord(\chi)=d} (n-1) \sqrt{q}\right)\\
&= \frac{\phi(q^n-1)}{q^n-1}\ \left ( q-\sum\limits_{d\mid q^n-1, d>1}  (n-1) \sqrt{q}\right)\\
&= \frac{\phi(q^n-1)}{q^n-1}\ \left ( q- (\tau(q^n-1)-1)(n-1) \sqrt{q}\right)
  \end{align*}
\end{proof}

Similarly by applying Theorem \ref{main} we have
\begin{thm}
Let $\tau(x)$ denote the number of divisors of $x$.
Suppose $t$ satisfies the following conditions

 1. $(t, \frac{q^{n}-1}e)=1;$

 2. Each prime factor of $t$ divides $e$;

 3. $q^{n}\equiv 1(\bmod \ 4)$ if $t\equiv0 (\bmod\ 4)$;

 4.  $nt\leq \sqrt{q}$.

If $\tau(q^n-1)<\frac{\sqrt{q}}{nt-1}+1$, then the set $$S=V(\alpha-x^t)=\{ \alpha-x^t, x\in\f\}$$
of cardinality $O(\frac {q} {\gcd{(t, q-1)}})$ in $\mathbb{F}_{q^n}$ contains a primitive element.
\end{thm}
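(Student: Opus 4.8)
The plan is to adapt, essentially verbatim, the argument given above for the case $t=1$ (the preceding theorem), replacing the linear polynomial $\alpha-x$ by the binomial $\alpha-x^{t}$ and keeping track of the fact that $x\mapsto x^{t}$ is no longer a bijection of $\f$. Two things must be supplied beyond the $t=1$ case: the irreducibility of $\alpha-x^{t}$ over $\mathbb{F}_{q^{n}}$ (so that the Weil estimate applies with the small exponent $nt-1$), and the cardinality count for the non-injective image. Throughout I take $\alpha$ to generate $\mathbb{F}_{q^{n}}$ over $\f$, as is implicit in all these statements.

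First I would record the structural input. Since every prime factor of $t$ divides $e=\operatorname{ord}(\alpha)$ and $e\mid q^{n}-1$, the characteristic $p$ does not divide $t$; hence $x^{t}-\alpha$ is separable and its largest squarefree divisor has degree $D=t$. Conditions 1--3 are precisely the hypotheses of Lemma \ref{Lemma2.5} over the field $\mathbb{F}_{q^{n}}$, so $\alpha-x^{t}$ is irreducible over $\mathbb{F}_{q^{n}}$. Fixing a root $\zeta$ we get $[\mathbb{F}_{q^{n}}[\zeta]:\mathbb{F}_{q^{n}}]=t$, and since $\zeta^{t}=\alpha$ and $\alpha$ generates $\mathbb{F}_{q^{n}}$ over $\f$ we also get $\f[\zeta]=\f[\alpha][\zeta]=\mathbb{F}_{q^{n}}[\zeta]=\mathbb{F}_{q^{nt}}$. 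Then I would invoke Theorem \ref{main} with $f(x)=\alpha-x^{t}$ over $\mathbb{F}_{q^{n}}$, i.e.\ with $m=n$, $D=t$, and the simple root $\zeta$ for which $\mathbb{F}_{q^{n}}[\zeta]=\f[\zeta]$, to obtain for every nontrivial multiplicative character $\chi$ of $\mathbb{F}_{q^{n}}$
$$\Bigl|\sum_{a\in\f}\chi(\alpha-a^{t})\Bigr|\ \leq\ (nt-1)\sqrt{q};$$
condition 4 ($nt\leq\sqrt q$) guarantees $(nt-1)\sqrt q<q$, so this estimate is nontrivial.

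Next I would run the divisor sieve exactly as in the $t=1$ proof. Let $N$ be the number of $a\in\f$ for which $\alpha-a^{t}$ is primitive in $\mathbb{F}_{q^{n}}$; note $\alpha-a^{t}\neq 0$ since $\alpha\notin\f$. Writing $N=\sum_{a\in\f}P(\alpha-a^{t})$ with the indicator $P(\cdot)$ introduced above, expanding $P$ into characters, and interchanging the order of summation, one gets
$$N=\frac{\phi(q^{n}-1)}{q^{n}-1}\sum_{d\mid q^{n}-1}\frac{\mu(d)}{\phi(d)}\sum_{\operatorname{ord}(\chi)=d}\ \sum_{a\in\f}\chi(\alpha-a^{t}).$$
The term $d=1$ contributes $q$; for each $d>1$ there are $\phi(d)$ characters of order $d$, each inner sum has modulus at most $(nt-1)\sqrt q$, and $|\mu(d)|/\phi(d)\leq 1/\phi(d)$, so the block indexed by $d$ is at least $-(nt-1)\sqrt q$. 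Summing over the $\tau(q^{n}-1)-1$ divisors $d>1$ yields
$$N\ \geq\ \frac{\phi(q^{n}-1)}{q^{n}-1}\Bigl(q-(\tau(q^{n}-1)-1)(nt-1)\sqrt q\Bigr),$$
which is positive precisely when $\tau(q^{n}-1)<\sqrt q/(nt-1)+1$; since $N$ is a nonnegative integer this forces $N\geq 1$, so $S=V(\alpha-x^{t})$ contains a primitive element. For the cardinality, $x\mapsto x^{t}$ has fibres of size $\gcd(t,q-1)$ on $\f^{*}$, whence $|S|=1+\frac{q-1}{\gcd(t,q-1)}=O(q/\gcd(t,q-1))$; the ``in particular'' clause is just the case $t=1$, $n=h$, which is the preceding theorem.

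The step I expect to be the genuine obstacle is the field-theoretic bookkeeping of the second paragraph, namely making Theorem \ref{main} applicable with the clean exponent $nt-1$. This needs $\f[\zeta]=\mathbb{F}_{q^{n}}[\zeta]$, which is exactly why one must assume $\alpha$ generates $\mathbb{F}_{q^{n}}$ over $\f$: otherwise $\f[\zeta]$ is a proper subfield of $\mathbb{F}_{q^{nt}}$, and one would have to verify that every character $\chi$ occurring in the sieve remains nontrivial on $\text{Norm}_{\mathbb{F}_{q^{n}}[\zeta]/\mathbb{F}_{q^{n}}}\f[\zeta]$, which can fail for some $\chi$. Granting that generating hypothesis, the rest is a routine transcription of the $t=1$ argument with $(n-1)$ replaced throughout by $(nt-1)$.
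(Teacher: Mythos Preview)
Your proposal is correct and follows exactly the route the paper intends: the paper's own proof of this theorem is just the one line ``Similarly by applying Theorem \ref{main} we have'', meaning one reruns the $t=1$ computation with the Weil bound $(nt-1)\sqrt q$ in place of $(n-1)\sqrt q$. Your write-up in fact supplies more detail than the paper does---the irreducibility check via Lemma \ref{Lemma2.5}, the verification that $\f[\zeta]=\mathbb{F}_{q^{n}}[\zeta]$ so that Theorem \ref{main} applies, and the fibre-count for $|S|$---but the argument is the same.
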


%
%
%


{\bf Acknowledgements.} We thank Professor Daqing Wan for his lectures  on incomplete character sum and Dr. Robert Coulter for his help comments.




%
%
%
%
%
%
%
%
%
%

\newpage

\end{document}